\newtheorem{thm}{Theorem}[section]
\newtheorem{cor}[thm]{Corollary}
\newtheorem{lem}[thm]{Lemma}
\newtheorem{prop}[thm]{Proposition}
\theoremstyle{definition}
\newtheorem{defn}[thm]{Definition}
\newtheorem{ex}[thm]{Example}
\theoremstyle{remark}
\newtheorem{rem}[thm]{Remark}
\numberwithin{equation}{section}
\begin{document}

\author[Ruhollah Barati]{Ruhollah Barati}
\email{baratiroohollah@gmail.com}

\title{On some extensions of strongly unit nil-clean rings}
\keywords{strongly unit nil-clean rings, unit-regular rings}
%\thanks{*Corresponding author}
\subjclass[2010]{16S34; 16U99; 16E50; 16W10; 13B99}

\begin{abstract}
An element $x \in R$ is considered (strongly) nil-clean if it can be expressed as the sum of an idempotent $e \in R$ and a nilpotent $b \in R$ (where $eb = be$). If for any $x \in R$, there exists a unit $u \in R$ such that $ux$ is (strongly) nil-clean, then $R$ is called a (strongly) unit nil-clean ring. It is worth noting that any unit-regular ring is strongly unit nil-clean. 
In this note, we provide a characterization of the unit regularity of a group ring, along with an additional condition. We also fully characterize the unit-regularity of the group ring $\mathbb{Z}_nG$ for every $n > 1$. Additionally, we discuss strongly unit nil-cleanness in the context of Morita contexts, matrix rings, and group rings.    
\end{abstract}

\maketitle

\section{Introduction}
Throughout, all rings are associative with an identity. Almost all notions
and notations are standard being in agreement with the well-known book \cite{lam2013first}.
We denote the Jacobson radical, the lower nil-radical, the Levitzki radical, and the set nilpotent elements of a ring $R$,  by the symbols $J(R)$, $\mathrm{Nil}_*(R)$, $L(R)$, and $\mathrm{Nil}(R)$, respectively. Moreover, note that the following inclusions hold:

\[\mathrm{Nil}_*(R)\subseteq L(R) \subseteq \mathrm{Nil}(R)\cap J(R).\]

\medskip

Recall that a ring $R$ is said to be {\it $2$-primal} if $\mathrm{Nil}_*(R)=\mathrm{Nil}(R)$. For example, every commutative or reduced ring is $2$-primal. A ring $R$ is called {\it weakly $2$-primal} if the equality $\mathrm{Nil}(R)=L(R)$ holds. It is clear that every $2$-primal ring is weakly $2$-primal. However, the converse implication fails as there exist weakly $2$-primal rings that are not $2$-primal (see, for instance, \cite[Example 2.2]{Marks}). Also, a ring $R$ is called $\mathrm{NI}$ if the set $\mathrm{Nil}(R)$ forms an ideal. The notion of clean rings was initially introduced by Nicholson \cite{Nicholson} in 1977 while working on the exchange property of rings. Specifically, a ring $R$ is called clean if each of its element is a sum of an idempotent and a unit. Since the introduction of clean rings, many varieties of them like (strongly) nil-clean rings have been developed and researched. Diesl \cite{Diesl2013nil} was the first to introduce the notion of nil-clean rings and to establish the (strongly) nil-clean theory. A ring $R$ is called (strongly) nil-clean if each of its element is a sum of an idempotent and a nilpotent (that commutes with each other). It is straightforward to verify a nil-clean ring is clean. There are two main varieties of clean rings: periodic and strongly $\pi$-regular rings. The notion of periodic rings was introduced by Chacron \cite{Chacron} in his seminal article. An element $a \in R$ is considered periodic if $a^m = a^n$ for two distinct natural numbers $m$, $n$. A ring $R$ is called periodic if all its elements are periodic. Regular rings and their varieties are among the most important and fascinating rings that have been studied by many researchers over the years. 

1. An element $a \in R$ is said to be von neumann regular or regular for short if there exists an $x \in R$ such that $a = axa$. If $x \in R$ is invertible, then $a$ is called unit-regular.

2. An element $a \in R$ is said to be strongly regular if $a \in a^2R \cap Ra^2$.

 A ring $R$ is called (unit-)regular if all its elements are (unit-)regular. A ring $R$ is called (strongly) $\pi$-regular if for any $a \in R$, there exist a natural number $n$ such that $a^n$ is (strongly) regular. Inspired \cite{Diesl2013nil}, Chen et al. \cite{SheibaniSUN} introduced the strongly unit nil-clean rings as a generalization of strongly nil-clean rings. Specifically, a ring R is said to be strongly unit nil-clean if, for any $a \in R$, there is a unit $u \in R$ such that $ux$ is strongly nil-clean. They proved that a strongly $\pi$-regular ring is strongly unit nil-clean. It is clear that a unit-regular ring is strongly unit nil-clean. Also, we know that a right (left) perfect ring is strongly $\pi$-regular. According to this, the class of strongly unit nil-clean rings is relatively large. Therefore, we will have the following chart.
 
 \[\xymatrix{
  \{\text{periodic Rings}\} \ar[r] & \{\text{Strongly $\pi$-regular Rings}\} \ar[r] & \{\text{Strongly unit nil-clean Rings}\} \\ 
 &\{\text{Right (Left) Rings}\} \ar[u] & \{\text{unit-regular Rings}\} \ar[u] &
 }\] 
 
The regularity of group rings was completely characterized by Connell \cite{Connell} and their perfectness by Woods \cite{Woods} as follows. 

\begin{thm}[{\cite[Theorem 3]{Connell}}]\label{Theorem0.1}
The group ring $RG$ is regular if and only if
\begin{enumerate}
\item[(1)] 
$R$ is regular,
\item [(2)]
$G$ is locally finite and,
\item [(3)]
each $n \in o(G)$ is a unit in $R$, where $o(G)$ denotes the set of orders
of all finite subgroups of G.
\end{enumerate}
\end{thm}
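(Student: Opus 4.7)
My plan is to prove the equivalence by handling necessity and sufficiency separately, treating the three conditions one at a time; I expect the local-finiteness condition to be the main obstacle.

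For the necessity direction, assume $RG$ is regular. Condition~(1) is immediate from the augmentation homomorphism $\epsilon: RG \to R$ sending $\sum r_g g \mapsto \sum r_g$: this is a surjective ring homomorphism, and quotients of regular rings are regular. For condition~(3), let $H \leq G$ be a finite subgroup of order $n$ and put $\hat{H} = \sum_{h \in H} h \in RG$. Regularity provides $y = \sum_g y_g g$ with $\hat{H} = \hat{H} y \hat{H}$. Expanding the right-hand side and reindexing, the coefficient of the identity element $1 \in G$ collapses to $n \sum_{h \in H} y_{h^{-1}}$, which therefore equals $1$; since $n \cdot 1_R$ is central in $R$, this exhibits $n$ as a unit.

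Condition~(2), that $G$ is locally finite, is the delicate step. The approach is to first rule out elements of infinite order: if $g \in G$ had infinite order, then $R\langle g \rangle \cong R[t, t^{-1}]$, in which $1 - t$ is not regular (any equation $1 - t = (1-t)y(1-t)$ forces $1 - t$ to be a unit, a contradiction when $R$ is nonzero). To transfer this failure up from $R\langle g \rangle$ to $RG$, one uses that $RG$ is a free right $R\langle g \rangle$-module on a set of coset representatives of $\langle g \rangle$ in $G$, together with a projection or trace argument onto $R\langle g \rangle$ that turns a regularity equation for $1 - g$ in $RG$ into one inside the subring. Upgrading from the no-infinite-order conclusion to local finiteness of arbitrary finitely generated subgroups then requires additional structural work inside such subgroups, and this is the step I expect to be the most technically demanding.

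For the sufficiency direction, assume~(1)--(3) and take any $\alpha \in RG$. Its finite support generates a finitely generated and hence, by~(2), finite subgroup $H$ of order $n$, so $\alpha \in RH$. The problem therefore reduces to showing $RH$ is regular whenever $R$ is regular and $n$ is invertible in $R$. Here the invertibility of $n$ supplies a Maschke-type separability idempotent $e = \frac{1}{n}\sum_{h \in H} h \otimes h^{-1} \in RH \otimes_R RH$, making $RH$ a separable $R$-algebra. Concretely, the left regular representation gives an embedding $\lambda: RH \hookrightarrow \mathrm{End}_R(RH) \cong M_n(R)$; regularity of $M_n(R)$ (inherited from~(1)) produces $\lambda(\alpha) = \lambda(\alpha)\beta\lambda(\alpha)$ for some $\beta$, and an averaging operator valued in $\lambda(RH)$---well-defined because $n$ is invertible---projects $\beta$ to a preimage $\gamma \in RH$ satisfying $\alpha = \alpha\gamma\alpha$.
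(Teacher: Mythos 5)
The paper does not prove this statement---it quotes it from Connell---so I can only assess your argument on its own terms. The necessity of (1) via the augmentation map, the necessity of (3) via the coefficient of the identity in $\hat{H}=\hat{H}y\hat{H}$ (which correctly collapses to $n\sum_{k\in H}y_k=1$ with $n$ central), and the sufficiency direction (reduce to a finite subgroup $H$ of invertible order, embed $RH$ into $\mathrm{End}(RH_R)\cong \mathbb{M}_n(R)$, and average a quasi-inverse over $H$ using $\tfrac{1}{n}$ to make it right $RH$-linear) are all correct and are essentially the classical arguments.

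The genuine gap is in the necessity of (2). Your strategy, as described, establishes only that $G$ is a torsion group: for $g$ of infinite order, $1-t$ is a non-zero-divisor in $R[t,t^{-1}]$, so regularity would force it to be a unit, contradicting evaluation at $t=1$; and the $(R\langle g\rangle,R\langle g\rangle)$-bimodule projection $RG\to R\langle g\rangle$ does transfer the regularity equation down. But torsion does not imply locally finite: by the negative solution of the Burnside problem (Golod--Shafarevich, the Grigorchuk group, Tarski monsters) there exist infinite finitely generated torsion groups, so the ``additional structural work inside such subgroups'' that you defer cannot be purely group-theoretic---you must invoke the ring again, and this is precisely the heart of Connell's proof. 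The missing step is: for a finitely generated subgroup $H=\langle h_1,\dots,h_k\rangle$, the same projection argument shows $RH$ is regular; its augmentation ideal $\omega$ is generated as a left ideal by $1-h_1,\dots,1-h_k$ (using $1-ab=(1-a)+a(1-b)$ and $1-a^{-1}=-a^{-1}(1-a)$), hence $\omega=RHe$ for an idempotent $e\neq 1$, so $1-e$ is a nonzero element of the right annihilator of $\omega$. Any $\alpha\neq 0$ with $(1-h)\alpha=0$ for all $h\in H$ satisfies $h\alpha=\alpha$, so its (finite) support is invariant under left translation by $H$ and therefore equals all of $H$; hence $H$ is finite. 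Without this (or an equivalent) argument the proof of (2) is incomplete.
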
 

\begin{thm}[{\cite[Theorem]{Woods}}]\label{Theorem0.2}
The group ring $RG$ is perfect if and only if $R$ is perfect and
$G$ is finite.
\end{thm}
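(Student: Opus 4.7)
The plan is to prove both implications using Bass's characterization: a ring $S$ is right perfect if and only if $J(S)$ is right T-nilpotent and $S/J(S)$ is semisimple Artinian.

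For the ``if'' direction, assume $R$ is right perfect and $G$ is finite. Set $I := J(R)G \subseteq RG$. I would verify that $I$ is a two-sided ideal contained in $J(RG)$ and show it is right T-nilpotent by a König's lemma argument: given a sequence $(a_n)$ in $I$ with $a_n = \sum_{g\in G} r_{n,g}\,g$, form the tree of tuples $(g_1,\dots,g_n) \in G^n$ such that $r_{1,g_1} r_{2,g_2}\cdots r_{n,g_n} \neq 0$. This tree is finitely branching (branching factor $|G|$), and any infinite branch would yield an infinite nonvanishing product of elements of $J(R)$, contradicting T-nilpotence of $J(R)$; hence the tree has bounded depth, beyond which every partial product vanishes, forcing $a_1 a_2 \cdots a_N = 0$ for $N$ sufficiently large. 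Next, $RG/I \cong (R/J(R))G$ is a finitely generated module over the semisimple Artinian ring $R/J(R)$, hence itself Artinian, hence right perfect. Standard lifting through the T-nilpotent ideal $I$ then gives that $RG$ is right perfect.

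For the ``only if'' direction, assume $RG$ is right perfect. Right perfectness passes to homomorphic images (the semisimple-Artinian-mod-T-nilpotent structure descends), so the surjective augmentation $\varepsilon\colon RG \to R$ shows $R$ is right perfect. To see $G$ is finite I would argue contrapositively: assuming $G$ infinite, construct an obstruction in $RG$ --- either an infinite strictly descending chain of principal left ideals or an infinite orthogonal family of nonzero idempotents (both are forbidden in a right perfect ring, the latter because right perfect rings are semilocal). When $G$ has an element of infinite order, a Laurent-polynomial-type subquotient of $RG$ furnishes the descending chain; when $G$ is locally finite but infinite, finite subgroups of arbitrarily large order yield either an infinite orthogonal set of idempotents via Maschke-style averaging (when the subgroup orders are invertible in $R$) or an infinite non-terminating chain of radical products from nested augmentation ideals (when characteristic obstructs averaging).

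The main obstacle is the ``only if'' direction in the infinite case, since the obstruction must be constructed uniformly across vastly different group-theoretic regimes. The interplay between the characteristic of $R$ and the subgroup structure of $G$ means no single gadget covers every configuration --- torsion-free elements, locally finite torsion with cooperative characteristic, and locally finite torsion with obstructive characteristic all require tailored constructions. The real technical work lies in packaging this case analysis cleanly so that every possibility for the infinite group $G$ is covered, and ensuring that the resulting obstruction genuinely violates either the T-nilpotence of $J(RG)$ or the semilocality of $RG$.
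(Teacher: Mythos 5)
The paper does not prove this statement; it is quoted verbatim as a known theorem of Woods, so there is no in-paper argument to compare yours against, and I can only assess your proposal on its own terms. Your ``if'' direction is sound: the K\"onig's-lemma tree argument does show that $J(R)G$ is right T-nilpotent when $G$ is finite and $J(R)$ is right T-nilpotent (prefixes of a nonvanishing product are nonvanishing, so the tuples really do form a finitely branching tree, and an infinite branch contradicts T-nilpotence of $J(R)$); $(R/J(R))G$ is module-finite over a semisimple Artinian ring, hence Artinian, hence perfect; and perfectness lifts through a T-nilpotent ideal since T-nilpotence is closed under extensions. The ``only if'' claim that $R$ is perfect, via the augmentation map, is also fine.

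The genuine gap is in showing $G$ is finite, and it is not merely a matter of ``packaging the case analysis cleanly'': your case analysis is not exhaustive. An infinite group with no element of infinite order need not be locally finite --- infinite finitely generated torsion groups exist (Golod--Shafarevich groups, free Burnside groups $B(m,n)$ for large odd $n$), and for these neither of your gadgets applies: there is no Laurent subring and there need be no infinite ascending chain of finite subgroups to average over. Even within the cases you do treat there are unproved steps. The Laurent ring $R[x,x^{-1}] \cong R\langle g\rangle$ is a \emph{subring} of $RG$, not a homomorphic image, and perfectness does not pass to subrings; you would need the lemma that DCC on principal left ideals descends from $RG$ to $RH$ for $H \leq G$ (true, via freeness of $RG$ as an $RH$-module and contraction of ideals, but it must be stated and proved). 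In the locally finite case with ``obstructive characteristic,'' the phrase ``infinite non-terminating chain of radical products from nested augmentation ideals'' is not an argument, and ``the characteristic of $R$'' is not even a well-behaved invariant here, since $R/J(R)$ can be a product of simple Artinian rings of different characteristics, so a given subgroup order may be invertible in some factors and not others. As it stands, the converse direction is a sketch with a hole that no uniform refinement of the listed cases will close; you would need either Woods' actual argument or a genuinely different obstruction that applies to arbitrary infinite groups.
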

\newpage
\noindent Inspired by the above theorems, we pose two following questions.

\textbf{Question 1:} Does Theorem \ref{Theorem0.1} hold in unit-regularity situation?

\textbf{Question 2:} Assuming $R$ is a strongly unit nil-clean ring and $G$ is a locally finite group, is the group ring $RG$ strongly unit nil-clean? In the special case, let $R$ be a right (left) perfect or unit-regular ring and $G$ be a locally finite group, is the group ring $RG$ strongly unit nil-clean?  

The main purpose of this article is to provide answers to the questions mentioned above. In the unit-regularity situation, we demonstrate that Theorem \ref{Theorem0.1} holds with an additional requirement and also we provide partial answers to the question 2.

\section{Basic Results}
In this section, we will prove two important results that will be needed in the following sections.

\begin{defn}\label{Definition0.1}
A ring $R$ is called (strongly) unit nil-clean if for any $a \in R$, there exists a unit $u \in R$, such that $ua$ is (strongly) nil-clean.
\end{defn}

\begin{lem} \label{Lemma1.4}
Let $R$ be a ring such that $R=S+K$, where $S$ is an unital subring of $R$, and $K$ is a nil ideal of $R$. Then $R$ is (strongly) unit nil-clean if and only if $S$ is (strongly) unit nil-clean.
\end{lem}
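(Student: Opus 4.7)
The plan is to pass to the quotient by $K$. First I would check that $S \cap K$ is a nil ideal of $S$: it is contained in the nil ideal $K$, and for $x \in S \cap K$ and $s \in S$ both $sx$ and $xs$ lie in $S$ (since $S$ is a unital subring) and in $K$ (since $K \triangleleft R$). The map $s + (S \cap K) \mapsto s + K$ is then a ring isomorphism $S/(S \cap K) \xrightarrow{\sim} R/K$, with surjectivity coming from $R = S + K$ and injectivity from the definition of $S \cap K$.

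Using this isomorphism, the lemma reduces to the following principle: if $J$ is a nil ideal of a ring $A$, then $A$ is (strongly) unit nil-clean if and only if $A/J$ is. Applied to $A = R$, $J = K$ and to $A = S$, $J = S \cap K$, together with the isomorphism above, this yields the equivalence of (strongly) unit nil-cleanness for $R$ and $S$.

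For this principle, the forward direction is immediate (reduce modulo $J$). For the reverse, given $a \in A$, pick a unit $\bar u \in A/J$ with $\bar u \bar a$ (strongly) nil-clean; since $J \subseteq J(A)$, $\bar u$ lifts to a unit $u \in A$, and the problem reduces to showing that an element $z \in A$ whose image is (strongly) nil-clean in $A/J$ is itself (strongly) nil-clean in $A$. For plain nil-cleanness, lift the idempotent $\bar e$ to an idempotent $e \in A$ (possible because $J$ is nil) and note that the complementary element is nilpotent, since its image is nilpotent in $A/J$ and $J$ is nil. The strong case is the technical heart: the lifted idempotent must commute with $z$. Here I would first deduce that $z(1-z)$ is nilpotent from $\bar z - \bar z^2 = \bar b(1 - 2\bar e - \bar b) \in \mathrm{Nil}(A/J)$, then choose $n$ with $(z(1-z))^n = 0$ and use the identity $1 = (z + (1-z))^{2n-1}$, split into the terms divisible by $z^n$ and those divisible by $(1-z)^n$, to produce polynomials $P,Q \in \mathbb{Z}[z]$ with $e := z^n P(z)$ satisfying $e^2 = e$ and $z - e = z(1-e) - (1-z)e$ a sum of commuting orthogonal nilpotents. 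Because $e \in \mathbb{Z}[z]$, it automatically commutes with $z - e$, giving the required strong decomposition. This polynomial construction of a commuting idempotent lift is the main obstacle; everything else is routine bookkeeping.
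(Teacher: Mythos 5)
Your proof is correct and follows the same route as the paper: identify $S\cap K$ as a nil ideal of $S$, use the second isomorphism theorem to get $S/(S\cap K)\cong R/K$, and reduce everything to the fact that (strongly) unit nil-cleanness passes to and lifts from quotients by nil ideals. The only difference is that the paper simply cites this last fact (\cite[Lemma 2.1]{SheibaniSUN}, resp.\ \cite[Lemma 3.1]{Sheibani}) and applies it twice, whereas you prove it from scratch; your argument for it --- lifting the unit via $J\subseteq J(A)$, lifting idempotents modulo a nil ideal, and in the strong case deducing that $z-z^2$ is nilpotent and extracting a commuting idempotent $e=z^nP(z)$ from $1=(z+(1-z))^{2n-1}$ --- is sound.
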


\begin{proof}
It is clear that $S \cap K \subseteq K$ is a nil ideal of $S$. According to the second isomorphism theorem of rings, $\frac{R}{K}=\frac{S+K}{K} \cong \frac{S}{S \cap K}$. Now Using \cite[Lemma 3.1]{Sheibani} (\cite[Lemma 2.1]{SheibaniSUN}) twice, the proof will be completed.
\end{proof}

A graded ring is a ring whose underlying additive group is a direct sum of abelian groups $R_i$ such that $R_{i}R_{j}\subseteq R_{i+j}$ for all nonnegative numbers $i$ and $j$. The index set is usually the set of nonnegative integers or the set of integers, but can be any monoid. Generally, the index set of a graded ring is assumed to be the set of nonnegative integers, unless otherwise explicitly specified. This is the case in this article.
A nonzero element of $R_{n}$ is said to be homogeneous of degree $n$. By the definition of the direct sum, every nonzero element $a$ of $R$ can be uniquely written as a sum $a=a_{0}+a_{1}+\cdots +a_{n}$, where each $a_{i}$ is either 0 or homogeneous of degree $i$. The nonzero $a_{i}$'s are the homogeneous components of $a$.

\begin{thm}\label{Theorem1.5}
Let $R=\oplus_{i \in \mathbb{N}} R_i$ be a graded ring and $n \in \mathbb{N}$. Suppose $R_i=0$ for $i>n$. Then $R$ is (strongly) unit nil-clean if and only if $R_0$ is (strongly) unit nil-clean.  
\end{thm}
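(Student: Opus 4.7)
The plan is to reduce the statement directly to Lemma \ref{Lemma1.4} by decomposing $R$ as $R = S + K$ with $S = R_0$ and $K = \bigoplus_{i \geq 1} R_i = R_1 \oplus R_2 \oplus \cdots \oplus R_n$, then verifying the three hypotheses of that lemma: $S$ is a unital subring of $R$, $K$ is an ideal, and $K$ is nil.

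First, I would check that $R_0$ is a unital subring. It is clearly closed under addition and multiplication since $R_0 R_0 \subseteq R_0$. To see that $1_R \in R_0$, write $1_R = e_0 + e_1 + \cdots + e_n$ with $e_i \in R_i$. For any homogeneous $a \in R_j$, the identity $a = a \cdot 1_R = ae_0 + ae_1 + \cdots + ae_n$ expresses $a$ as a sum with $ae_i \in R_{j+i}$. Uniqueness of the graded decomposition forces $a = ae_0$ and $ae_i = 0$ for $i \geq 1$; a symmetric argument on the left gives $e_0 a = a$. Since this holds for every homogeneous element and these span $R$, $e_0$ is a two-sided identity, so $e_0 = 1_R \in R_0$.

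Next, $K = \bigoplus_{i \geq 1} R_i$ is an ideal: for homogeneous $r \in R_j$ and $k \in R_i$ with $i \geq 1$, the products $rk$ and $kr$ lie in $R_{i+j}$, which sits inside $K$ (either because $i+j \geq 1$, or is zero when it exceeds $n$). Extending by linearity gives $RK \subseteq K$ and $KR \subseteq K$. Moreover, $K$ is nilpotent: a product of $n+1$ elements of $K$ is a sum of products of homogeneous components, each of total degree at least $n+1$, hence lying in $\bigoplus_{i \geq n+1} R_i = 0$. Thus $K^{n+1} = 0$, and in particular $K$ is a nil ideal.

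With these observations, $R = R_0 + K$ satisfies exactly the hypotheses of Lemma \ref{Lemma1.4}, and the conclusion that $R$ is (strongly) unit nil-clean if and only if $R_0$ is (strongly) unit nil-clean follows immediately. I do not foresee any real obstacle here; the only point that requires a brief argument (rather than a direct appeal to the grading axioms) is the verification that $1_R \in R_0$, which uses uniqueness of the homogeneous decomposition.
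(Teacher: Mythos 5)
Your proof is correct and takes essentially the same route as the paper's: both decompose $R=R_0\oplus K$ with $K=\oplus_{i\geq 1}R_i$, show $K$ is a nil (indeed nilpotent) ideal by the degree count, and invoke Lemma \ref{Lemma1.4}. Your careful verification that $1_R\in R_0$ is a detail the paper dismisses as ``easy to verify,'' but it is the same argument in substance.
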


\begin{proof}
Set $T=\oplus_{i=1}^n R_i$. So $R=R_0 \oplus T$. It is easy to verify that $R_0$ is an unital  subring of $R$ and $T$ is a nil ideal. In fact, if $x=\sum_{i=1}^{n} r_i \in T$ then $x^{n+1}=\sum_{i_1,i_2, \cdots,i_{n+1}=1}^{n}
r_{i_1}r_{i_2}\cdots r_{i_{n+1}} \in R_{i_1}R_{i_2} \cdots R_{i_{n+1}} \subseteq R_{i_1+i_2+\cdots+i_{n+1}}$. As $i_1+i_2+\cdots+i_{n+1} \geq n+1$ and $R_{i_1+i_2+\cdots+i_{n+1}}=0$, $x^{n+1}=0$.
  Now Lemma \ref{Lemma1.4} gives the desired result, as respected. 
\end{proof}

\section{Mortita Context and Matrix Rings}
In this section, we present some results related to the strongly unit nil-cleanness of matrix rings and Morita context rings. We begin with the following simple technicality.

\begin{prop}\label{proposition1.1}
The ring $R$ is unit-regular if and only if the full matrix ring $\mathbb{M}_n(R)$ is unit-regular for any $n \in \mathbb{N}$. 
\end{prop}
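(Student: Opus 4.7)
My strategy is to prove the two implications separately, relying on Ehrlich's characterization: a ring $S$ is unit-regular if and only if $S$ is von Neumann regular and has stable range one.

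For the reverse direction, suppose $\mathbb{M}_n(R)$ is unit-regular. I would realize $R$ as the corner ring $E_{11}\mathbb{M}_n(R)E_{11}$, where $E_{11}$ is the standard matrix unit; the map $r \mapsto rE_{11}$ gives the desired isomorphism $R \cong E_{11}\mathbb{M}_n(R)E_{11}$. The task then reduces to verifying that corners $eSe$ of a unit-regular ring $S$ are again unit-regular. This can be done either by checking directly that both von Neumann regularity and stable range one pass to corner subrings, or by invoking Morita-theoretic arguments.

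For the forward direction, suppose $R$ is unit-regular, so $R$ is von Neumann regular and has stable range one. Regularity transfers from $R$ to $\mathbb{M}_n(R)$ by a classical result. For stable range one, I would invoke Vaserstein's theorem, which states that having stable range one is a Morita invariant; in particular $\mathbb{M}_n(R)$ inherits stable range one from $R$. Applying Ehrlich's characterization once more then yields that $\mathbb{M}_n(R)$ is unit-regular.

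The main obstacle is the appeal to Vaserstein's theorem; although the statement is standard, its proof is a substantive piece of K-theory involving delicate manipulations of elementary matrices. A more elementary but more laborious alternative is the direct approach: given $A \in \mathbb{M}_n(R)$, exploit unit-regularity of $R$ to reduce $A$ via invertible row and column operations to a diagonal matrix of idempotents, and then collect the inverse transformations into a single unit $U \in \mathbb{M}_n(R)$ so that $A = UE$ with $E$ idempotent. Either route confirms that the proposition is indeed a ``simple technicality'' in spirit, even though one of the two ingredients is nontrivial to establish from scratch.
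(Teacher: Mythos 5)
Your argument is correct, but it routes through different machinery than the paper. The paper's proof of the forward direction views $R^n$ as a finitely generated projective right $R$-module and quotes Goodearl's result that endomorphism rings of such modules over a unit-regular ring are unit-regular (\cite[Corollary 4.7]{Goodearl}); for the converse it simply cites Hartwig--Luh. You instead factor both directions through the characterization ``unit-regular $=$ regular $+$ stable range one,'' getting the forward direction from Vaserstein's Morita invariance of stable range one and the converse by realizing $R$ as the corner $E_{11}\mathbb{M}_n(R)E_{11}$. Both routes are standard and both ultimately lean on one nontrivial imported theorem (Goodearl's Corollary 4.7 in the paper, Vaserstein's theorem for you), so neither is more self-contained than the other; your corner-ring argument for the converse is arguably cleaner and more transparent than an unexplained citation, and your closing remark about diagonal reduction points to the genuinely elementary (if laborious) proof. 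One small correction: the equivalence ``unit-regular $\Leftrightarrow$ regular with stable range one'' is usually attributed to Fuchs, Kaplansky, and Henriksen (see \cite[Proposition 4.12]{Goodearl}), not to Ehrlich; Ehrlich's theorem is the element-wise statement that $a$ is unit-regular iff it is regular and (left) morphic, which is what the paper invokes later in Section 4. This is an attribution slip, not a mathematical gap.
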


\begin{proof}
($\Rightarrow$) \quad Let $R$ be an unit-regular ring. Consider the right $R$-module $R_R$. Set $A={(R_R)}^n$. Then $A$ is a free right $R$-module with a finite basis. So $A$ is a finitely generated projective right $R$-module. Now $End_R(A) \cong \mathbb{M}_n(R)$ is an unit-regular ring by \cite[Corollary 4.7]{Goodearl}. 

($\Leftarrow$) \quad It follows from \cite[Theorem 7]{hartwig}.
\end{proof}

\begin{lem}\label{Lemma1.2}
Let $D$ be a domain. The full matrix ring $\mathbb{M}_n(D)$ is strongly unit nil-clean if and only if $D$ is a division ring.
\end{lem}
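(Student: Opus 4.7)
The plan is to handle the two directions separately. For $(\Leftarrow)$: if $D$ is a division ring, then $\mathbb{M}_n(D)$ is simple Artinian, hence unit-regular (by Proposition~\ref{proposition1.1} applied to $D$, or directly by \cite[Corollary~4.7]{Goodearl}), and therefore strongly unit nil-clean as recorded in the introduction.

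For $(\Rightarrow)$, I would show that every nonzero $a \in D$ is a unit. Fixing such an $a$, I apply the strongly unit nil-clean hypothesis to the scalar matrix $A = a I_n$, obtaining a unit $U$ and a strongly nil-clean decomposition $UA = E + B$ with $E^2 = E$, $B$ nilpotent, and $EB = BE$. Assuming $D$ is commutative, taking determinants then gives $\det(UA) = a^n \det(U)$ on the left, and $\det(E + B) = \det(E)$ on the right. The latter identity is the heart of the argument: since $E$ and $B$ commute, they can be simultaneously upper-triangularized over the algebraic closure of $\mathrm{Frac}(D)$, and the nilpotent $B$ has only zero eigenvalues, so the diagonals of the triangularized forms of $E+B$ and of $E$ agree.

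The rest is quick bookkeeping: since $\det(E)^2 = \det(E^2) = \det(E)$, the value $\det(E)$ is an idempotent of $D$, and commutative-domain idempotents are $0$ or $1$; hence $a^n \det(U) \in \{0,1\}$. As $\det(U)$ is a unit and $D$ is a domain with $a \neq 0$, the value $0$ is impossible, forcing $a^n \det(U) = 1$ and thus $a$ a unit. The identity $\det(E+B) = \det(E)$ for commuting idempotent--nilpotent pairs is the main nontrivial step I expect to justify carefully; if $D$ is permitted to be noncommutative, this determinant computation must be replaced by a Dieudonn\'e-type invariant, which is where I would anticipate the real difficulty. When $n=1$ the whole argument collapses, since $D$ itself has no nontrivial idempotents and no nonzero nilpotents, so $ua = e + b \in \{0,1\}$, and $ua \neq 0$ in a domain forces $ua = 1$.
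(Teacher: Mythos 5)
Your $(\Leftarrow)$ direction matches the paper's. The problem is in $(\Rightarrow)$: you explicitly assume $D$ is commutative, but the lemma is stated for an arbitrary domain, and the conclusion ``division ring'' (rather than ``field'') makes clear that noncommutative domains are in scope --- indeed the lemma is applied later in the paper to quotients of a reduced ring by a completely prime ideal, which are domains with no commutativity guarantee. Your proposed repair via a Dieudonn\'e-type invariant cannot close the gap: the Dieudonn\'e determinant is defined for matrices over a division ring, which is exactly what you are trying to prove $D$ is, and a general domain need not embed in any division ring at all (Malcev), so you cannot even pass to a ring of fractions. The determinant strategy is therefore not merely incomplete; it is structurally tied to the commutative case.

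The paper's argument avoids determinants entirely and works in full generality. By \cite[Theorem 2.2]{SheibaniSUN}, for $A=aI_n$ with $0\neq a\in D$ there are a unit $U$ and an integer $t>1$ with $\bigl(AU-(AU)^2\bigr)^t=0$, i.e.\ $(AU)^t(I_n-AU)^t=0$. Since $D$ is a domain, the scalar matrix $aI_n$ is not a left zero-divisor in $\mathbb{M}_n(D)$, and $U$ is invertible, so $(AU)^t$ is left-cancellable; hence $(I_n-AU)^t=0$, so $AU=I_n-(I_n-AU)$ is a unit. This gives $a$ a right inverse in $D$, hence a two-sided inverse because $D$ is a domain. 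For what it is worth, if one does restrict to commutative $D$, your identity $\det(E+B)=\det(E)$ for a commuting idempotent--nilpotent pair is correct (simultaneous triangularization over the algebraic closure of the fraction field), and your bookkeeping goes through; but as written the proposal does not prove the stated lemma.
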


\begin{proof}
Take an arbitrary element $0 \neq a \in D$. 
For $A=aI_n$, it follows from  \cite[Theorem 2.2]{SheibaniSUN} that there is an integer $t>1$ and $U \in GL_n(D)$ such that ${(AU-{(AU)}^2)}^t=0$. The last equation implies that ${(aU)}^t{(I_n-aU)}^t=0$. As $D$ is a domain, $a \neq 0$, and $U$ is a unit, we have ${(I_n-aU)}^t=0$. Hence $I_n-(I_n-aU)=aU$ is a unit. So $(aU)B=I_n$ for a $B \in \mathbb{M}_n(D)$. Therefore, $a \in U(D)$.
The converse implication is obvious by Proposition \ref{proposition1.1}.
\end{proof}

\begin{lem}\label{Lemma 1.3}
Let $R$ be a reduced ring. The full matrix ring $\mathbb{M}_n(R)$ is strongly unit nil-clean if and only if $R$ is unit-regular.
\end{lem}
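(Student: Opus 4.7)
The reverse direction follows from Proposition \ref{proposition1.1} together with the remark (in the introduction) that every unit-regular ring is strongly unit nil-clean. For the forward direction I will exploit reducedness, reduce to the domain case of Lemma \ref{Lemma1.2}, and then construct a unit witnessing regularity for an arbitrary $a\in R$.

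The first step is to analyze prime quotients. Strong unit nil-cleanness passes to factor rings, and $\mathbb{M}_n(R)/\mathbb{M}_n(P)\cong \mathbb{M}_n(R/P)$ for each prime $P$; since $R$ is reduced, $R/P$ is a domain, and Lemma \ref{Lemma1.2} forces $R/P$ to be a division ring. Several consequences of this set-up will be used: $\bigcap_P P = \mathrm{Nil}_*(R) = 0$; idempotents of $R$ are central (a short semicommutativity calculation valid in any reduced ring); $R$ is directly finite (another short reduced-ring calculation showing $ab=1\Rightarrow ba=1$); and every maximal right ideal of $R$ coincides with a prime, because the division ring $R/P$ has no proper non-zero right ideals and any maximal right ideal must contain its primitive-ideal core.

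Fix $a\in R$. I will produce a central idempotent $\varepsilon$ that plays the role of the indicator of $\{P: a\notin P\}$. Applying the hypothesis to $A:=aI_n$ yields $U\in GL_n(R)$ with a strongly nil-clean decomposition $AU = E + Q$ (so $E^2=E$, $Q$ nilpotent, $EQ=QE$). I will invoke a general fact that holds in any ring: a strongly nil-clean unit $b=e+q$ must have $e=1$, since $b$ commutes with $1-e$ and $b(1-e) = q(1-e)$ is nilpotent. Reducing modulo a prime $P$: if $a\in P$ then $\overline{AU}=0$ forces $\overline{E}=0$ (an element which is both idempotent and nilpotent must vanish), and if $a\notin P$ then $\overline{AU}$ is a unit of $\mathbb{M}_n(R/P)$ and hence $\overline{E}=I_n$. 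Setting $\varepsilon:=E_{11}$, one has $\bar\varepsilon\in\{0,1\}$ in every $R/P$, and $\bigcap_P P=0$ gives $\varepsilon^2=\varepsilon$ and $\varepsilon a=a=a\varepsilon$, so that $e_0:=1-\varepsilon$ is a central idempotent with $e_0 a = 0$.

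Finally, set $u:=a+e_0$. In each $R/P$, $\bar u$ equals either $\bar a$ (nonzero) or $\bar{e_0}=1$, so $\bar u$ is always a unit of the division ring $R/P$; therefore $u$ lies in no prime of $R$, and by the coincidence of maximal right ideals with primes established above, in no maximal right ideal. Thus $uR=R$, and direct finiteness promotes right-invertibility to $u\in U(R)$. From $e_0 u = e_0 a + e_0^2 = e_0$ and the centrality of $e_0$ one gets $u^{-1}e_0=e_0$, whence $au^{-1}=(u-e_0)u^{-1}=1-e_0=\varepsilon$ and consequently $au^{-1}a=\varepsilon a = a$. The step I expect to be the main obstacle is the passage from ``unit modulo every prime'' to ``unit of $R$''; it is handled by the observation that maximal right ideals are automatically prime in our setting, which is itself the structural payoff of having forced every prime quotient to be a division ring.
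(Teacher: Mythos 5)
Your reverse direction matches the paper's. For the forward direction you and the paper share the first step (pass to prime quotients, which Lemma \ref{Lemma1.2} forces to be division rings), but then diverge: the paper simply cites \cite[Theorem 1.21]{Goodearl} to conclude that a reduced ring whose prime factors are division rings is regular, and \cite[Corollary 4.2]{Goodearl} to upgrade to unit-regularity, whereas you build the unit inner inverse by hand from the central idempotent $\varepsilon$ extracted out of the strongly nil-clean decomposition of $aI_n U$. Where your construction is carried out it is correct --- the dichotomy $\bar E\in\{0,I_n\}$ modulo each prime, the identities $\varepsilon^2=\varepsilon$, $\varepsilon a=a=a\varepsilon$, and $au^{-1}=\varepsilon$ all check --- and it has the merit of being self-contained and of exhibiting the inner inverse explicitly; the price is that you must supply by hand the facts the citation packages (centrality of idempotents, direct finiteness, and the identification of maximal right ideals with two-sided primes).

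There is one genuine gap: the claim that, because $R$ is reduced, $R/P$ is a domain for \emph{every} prime ideal $P$. That is false for noncommutative reduced rings --- the free algebra $k\langle x,y\rangle$ is a domain yet surjects onto $\mathbb{M}_2(k)$, so it has a prime (indeed primitive, maximal) ideal whose quotient is not a domain; only the \emph{minimal} primes of a reduced ring are guaranteed to be completely prime. This bites at exactly one point of your argument: the core of a maximal right ideal is a primitive ideal, which is prime but has no a priori reason to be minimal, so you cannot yet apply Lemma \ref{Lemma1.2} to it. The repair is short: run your first step over the minimal primes (whose intersection is still $0$), note that their quotients are simple, so minimal primes are maximal two-sided ideals; since every prime contains a minimal prime, every prime is minimal, hence every primitive ideal is among the primes you have already handled, and your identification of maximal right ideals with primes goes through. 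With that observation inserted, your proof is complete and constitutes a legitimate elementary alternative to the paper's appeal to Goodearl.
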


\begin{proof}
Assume that $\mathbb{M}_n(R)$ is strongly unit nil-clean. Take a completely prime ideal $P$ of $R$. The natural homomorphism $R \rightarrow R/P$ induces the surjective homomorphism $\mathbb{M}_n(R) \rightarrow \mathbb{M}_n(R/P)$. Hence $\mathbb{M}_n(R/P)$ is strongly unit nil-clean. Now Lemma \ref{Lemma1.2} guarantees that $R/P$ is a division ring. In view of \cite[Theorem 1.21]{Goodearl}, $R$ is a regular ring. It follows from \cite[Corollary 4.2]{Goodearl} that $R$ is unit-regualr. The converse implication is obvious by Proposition \ref{proposition1.1}.  
\end{proof}

\begin{thm}\label{Theorem1.2}
Let $R$ be a ring.
\begin{enumerate}
\item [(1)]
If $R$ is a weakly $2$-primal strongly unit nil-clean ring then $\mathbb{M}_n(R)$ is strongly unit nil-clean.
\item [(2)]
If $R$ is an $NI$-ring, and $\mathbb{M}_n(R)$ is strongly unit nil-clean then $R$ is strongly unit nil-clean.
\end{enumerate}
\end{thm}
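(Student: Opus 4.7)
The plan is to reduce both directions to the reduced case, apply Lemma \ref{Lemma 1.3}, and then lift through a nil ideal. The key preliminary observation is that a reduced strongly unit nil-clean ring is automatically unit-regular: for $a\in R$ and $u\in U(R)$ with $ua=e+b$ strongly nil-clean, reducedness forces $b=0$, hence $ua=e$, and multiplying $e^2=e$ on the left by $u^{-1}$ gives $aua=a$. I would also use freely, as is already used in Lemma \ref{Lemma1.4}, that strong unit nil-cleanness passes to homomorphic images and lifts through nil ideals (via \cite[Lemma 2.1]{SheibaniSUN}).

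For (1), set $L=L(R)$. Since $R$ is weakly $2$-primal, $L=\mathrm{Nil}(R)$, and $R/L$ is reduced: any nilpotent in $R/L$ lifts to $x\in R$ with $x^k\in L$, and local nilpotency of $L$ then yields $x^{km}=0$, so $x\in \mathrm{Nil}(R)=L$. Being a quotient of $R$, the ring $R/L$ is strongly unit nil-clean and therefore unit-regular by the preliminary observation. Proposition \ref{proposition1.1} yields $\mathbb{M}_n(R/L)$ unit-regular, hence strongly unit nil-clean. Local nilpotency of $L$ implies $\mathbb{M}_n(L)$ is a nil ideal of $\mathbb{M}_n(R)$ (the finitely many entries of any fixed matrix generate a nilpotent subring of $L$, so the matrix itself is nilpotent), and $\mathbb{M}_n(R)/\mathbb{M}_n(L)\cong \mathbb{M}_n(R/L)$. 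The nil-ideal-lifting then promotes strong unit nil-cleanness from the quotient to $\mathbb{M}_n(R)$.

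For (2), let $N=\mathrm{Nil}(R)$, which is a nil ideal by the NI hypothesis, so $R/N$ is reduced. The surjection $\mathbb{M}_n(R)\twoheadrightarrow \mathbb{M}_n(R)/\mathbb{M}_n(N)\cong \mathbb{M}_n(R/N)$ shows that $\mathbb{M}_n(R/N)$ is strongly unit nil-clean; applying Lemma \ref{Lemma 1.3} to the reduced ring $R/N$ gives that $R/N$ is unit-regular, hence strongly unit nil-clean. The same nil-ideal-lifting applied to the nil ideal $N$ of $R$ transfers strong unit nil-cleanness back to $R$.

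The only delicate point is verifying that $\mathbb{M}_n(L(R))$ is nil in part (1); this is where local (and not merely pointwise) nilpotency of $L(R)$ is used, and correspondingly why part (1) needs the weakly $2$-primal hypothesis rather than just NI: for an arbitrary nil ideal one cannot conclude that matrices over it are nil without brushing against K\"othe's conjecture. Part (2) neatly sidesteps this, since it only needs $N=\mathrm{Nil}(R)$ nil in $R$ (to lift through) and $R/N$ reduced (to feed into Lemma \ref{Lemma 1.3} via $\mathbb{M}_n(R/N)$), with no nilness claim on $\mathbb{M}_n(N)$ required.
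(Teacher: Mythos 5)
Your proposal is correct and follows essentially the same route as the paper: pass to the reduced quotient $R/\mathrm{Nil}(R)$, use that a reduced strongly unit nil-clean ring is unit-regular together with Proposition \ref{proposition1.1} (resp.\ Lemma \ref{Lemma 1.3}) on the matrix level, and lift back through the nil ideal $\mathbb{M}_n(\mathrm{Nil}(R))$ (resp.\ $\mathrm{Nil}(R)$) via the quoted lifting lemma. The only difference is that you spell out two facts the paper delegates to citations, namely the unit-regularity of reduced strongly unit nil-clean rings and the nilness of matrices over a locally nilpotent ideal.
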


\begin{proof} 
(1) \quad As the ring $R$ is weakly $2$-primal, $Nil(R)=L(R)$. So $R/Nil(R)$ is a reduced unit nil-clean i.e., it is unit-regular. Now it follows from Proposition \ref{proposition1.1} that  $\mathbb{M}_n(R/Nil(R))$ is unit-regular and, consequantly it is strongly unit nil-clean. On the other hand, $\mathbb{M}_n(Nil(R))$ is a nil ideal of $\mathbb{M}_n(R)$ by \cite[Proposition 1.1]{Barati}. But the isomorphism $\frac{\mathbb{M}_n(R)}{\mathbb{M}_n(Nil(R))} \cong \mathbb{M}_n(\frac{R}{Nil(R)})$ and \cite[Lemma 2.1]{SheibaniSUN} imply that the full matrix ring $\mathbb{M}_n(R)$ is strongly unit nil-clean.

(2) \quad The quotient ring $R/Nil(R)$ is  reduced, and $\mathbb{M}_n(R/Nil(R))$ is strongly unit nil-clean as homomorphic image of $\mathbb{M}_n(R)$. But Lemma \ref{Lemma 1.3} implies that $R/Nil(R)$ is unit-regular. Now \cite[Lemma 2.1]{SheibaniSUN} gives the desired result, as respected.
\end{proof}

\begin{cor}\label{Corollary1.1}
Let $R$ be a weakly $2$-primal ring. Then $\mathbb{M}_n(R)$ is strongly unit nil-clean if and only if $R$ is strongly unit nil-clean.
\end{cor}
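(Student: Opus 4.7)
The plan is to observe that both implications of the corollary follow immediately from the two parts of Theorem \ref{Theorem1.2}, provided I can bridge the gap between the two different hypotheses used there (namely, weakly $2$-primal in part (1) versus $NI$ in part (2)). Thus the only substantive task is to show that every weakly $2$-primal ring is $NI$.

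For this bridging step, I would recall that the Levitzki radical $L(R)$ is, by definition, the largest locally nilpotent ideal of $R$, and in particular $L(R)$ is always a two-sided ideal of $R$. By hypothesis, $R$ being weakly $2$-primal means precisely that $\mathrm{Nil}(R)=L(R)$, so $\mathrm{Nil}(R)$ inherits the ideal property from $L(R)$; hence $R$ is $NI$.

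With this observation in hand the proof of the corollary is immediate. For the $(\Leftarrow)$ direction, apply Theorem \ref{Theorem1.2}(1) directly: a weakly $2$-primal strongly unit nil-clean ring $R$ yields a strongly unit nil-clean matrix ring $\mathbb{M}_n(R)$. For the $(\Rightarrow)$ direction, since weakly $2$-primal implies $NI$, one may apply Theorem \ref{Theorem1.2}(2) to conclude from the strong unit nil-cleanness of $\mathbb{M}_n(R)$ that $R$ itself is strongly unit nil-clean.

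There is no real obstacle here; the corollary is essentially a bookkeeping consequence of Theorem \ref{Theorem1.2} once one notices the implication \emph{weakly $2$-primal $\Rightarrow$ $NI$}. The only thing to be careful about is not to conflate $\mathrm{Nil}(R)=L(R)$ with $\mathrm{Nil}(R)=\mathrm{Nil}_*(R)$ (which is the stronger $2$-primal condition), but since $L(R)$ is automatically an ideal either equality suffices for the $NI$ property.
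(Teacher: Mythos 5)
Your proposal is correct and matches the paper's (implicit) argument: the corollary is stated without proof precisely because it is the combination of Theorem \ref{Theorem1.2}(1) and \ref{Theorem1.2}(2), mediated by the observation that a weakly $2$-primal ring is $NI$ since $L(R)$ is always an ideal. Your bridging step is exactly the right one and is valid.
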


\begin{rem}
Corollary \ref{Corollary1.1} can be considered  generalization of \cite[Theorem 1]{Hirano}.
\end{rem}

\begin{cor}\label{Corollary1.2}
Let $R$ be a (left) right noetherian $NI$-ring. Then $M_n(R)$ is strongly unit nil-clean if and only if $R$ is strongly unit nil-clean.
\end{cor}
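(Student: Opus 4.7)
The plan is to reduce Corollary \ref{Corollary1.2} to the already-established Corollary \ref{Corollary1.1} by showing that a one-sided noetherian $NI$-ring is automatically weakly $2$-primal, after which no new matrix-theoretic work is required.

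For the direction $\mathbb{M}_n(R)$ strongly unit nil-clean $\Rightarrow R$ strongly unit nil-clean, I would just cite Theorem \ref{Theorem1.2}(2), whose only hypothesis beyond the matrix statement is that $R$ be $NI$, which we have. So this half is immediate and uses neither the noetherian assumption nor anything further.

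For the converse, I would first observe that since $R$ is $NI$, the set $\mathrm{Nil}(R)$ is a (two-sided) nil ideal of $R$. Then I would invoke Levitzki's classical theorem: in any right (or left) noetherian ring, every nil one-sided ideal is nilpotent. Applied to $\mathrm{Nil}(R)$, this gives $\mathrm{Nil}(R)^k=0$ for some $k\geq 1$. A nilpotent ideal is locally nilpotent, so $\mathrm{Nil}(R)\subseteq L(R)$; combined with the inclusion $L(R)\subseteq \mathrm{Nil}(R)$ recorded in the introduction, this forces $L(R)=\mathrm{Nil}(R)$, which is precisely the definition of $R$ being weakly $2$-primal. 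Now Theorem \ref{Theorem1.2}(1) (equivalently, Corollary \ref{Corollary1.1}) applies and yields that $\mathbb{M}_n(R)$ is strongly unit nil-clean.

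The main obstacle, if there is one, is purely bibliographic: identifying the right external input, namely Levitzki's theorem on nil one-sided ideals in one-sided noetherian rings. Once that classical fact is in hand the hypotheses of Corollary \ref{Corollary1.1} are available, and nothing further about strongly unit nil-clean rings or matrix rings needs to be proved from scratch. The $NI$ hypothesis is doing the work of turning $\mathrm{Nil}(R)$ into an ideal, and the noetherian hypothesis is doing the work of collapsing $\mathrm{Nil}(R)$ down to $L(R)$; together they bridge Theorem \ref{Theorem1.2}(1) and Theorem \ref{Theorem1.2}(2).
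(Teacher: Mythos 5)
Your proposal is correct and follows essentially the same route as the paper: the paper likewise invokes Levitzki's theorem (cited as \cite[Theorem 10.30]{lam2013first}) to conclude that $\mathrm{Nil}(R)$ is nilpotent, deduces that $R$ is weakly $2$-primal, and then applies Corollary \ref{Corollary1.1}. Your write-up merely makes explicit the intermediate steps (that $NI$ makes $\mathrm{Nil}(R)$ an ideal and that nilpotent implies locally nilpotent), which the paper leaves implicit.
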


\begin{proof}
It follows from \cite[Theorem 10.30]{lam2013first} that $Nil(R)$ is nilpotent. So $R$ is weakly $2$-primal. Now Corollary \ref{Corollary1.1} ensures the validity of the statement. 
\end{proof}

\begin{rem}
Theorem \ref{Theorem1.2} improves  \cite[Theorem 3.8]{Sheibani} and \cite[Theorem 2.6]{SheibaniSUN}. 
\end{rem}

By utilizing \cite[Lemma 2.1]{SheibaniSUN}, one can infer that every semi-local ring with a nil Jacobson radical is strongly unit nil-clean. In the simple following proposition, we show $M_n(R)$ is strongly unit nil-clean whenever $R$ is a semi-local ring with a locally nilpotent Jacobson radical.

\begin{prop}\label{Proposition1.3}
If $R$ is a semi-local ring with a locally nilpotent Jacobson radical then the full matrix ring $\mathbb{M}_n(R)$ is strongly unit nil-clean.
\end{prop}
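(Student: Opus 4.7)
The plan is to reduce modulo $J(R)$ and then lift back through a nil ideal, exactly paralleling the strategy of Theorem \ref{Theorem1.2}(1). Two ingredients are needed: first, that the quotient $\mathbb{M}_n(R/J(R))$ is strongly unit nil-clean; second, that $\mathbb{M}_n(J(R))$ is a nil ideal of $\mathbb{M}_n(R)$.

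For the quotient, $R$ being semi-local forces $R/J(R)$ to be semisimple Artinian, so by Wedderburn-Artin
\[
R/J(R) \;\cong\; \prod_{i=1}^{k} \mathbb{M}_{n_i}(D_i)
\]
for some division rings $D_1,\dots,D_k$, and consequently $\mathbb{M}_n(R/J(R)) \cong \prod_{i=1}^{k} \mathbb{M}_{n n_i}(D_i)$ is a finite direct product of matrix rings over division rings. By Proposition \ref{proposition1.1} each factor is unit-regular, hence strongly unit nil-clean; and since the strongly unit nil-clean property obviously passes to finite direct products (pick a witnessing unit in each coordinate), $\mathbb{M}_n(R/J(R))$ is strongly unit nil-clean.

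For the nil-ideal claim, $J(R)$ is locally nilpotent by hypothesis. Given any $A \in \mathbb{M}_n(J(R))$, the $n^2$ entries of $A$ generate a nonunital subring $T \subseteq J(R)$ which, being finitely generated, is nilpotent, say $T^m = 0$. Each entry of $A^m$ is a sum of $m$-fold products of entries of $A$, all lying in $T^m = 0$, so $A^m = 0$. (This is the same idea invoked through \cite[Proposition 1.1]{Barati} in the proof of Theorem \ref{Theorem1.2}(1).) Hence $\mathbb{M}_n(J(R))$ is a nil ideal of $\mathbb{M}_n(R)$, and the natural isomorphism $\mathbb{M}_n(R)/\mathbb{M}_n(J(R)) \cong \mathbb{M}_n(R/J(R))$ together with the lifting lemma \cite[Lemma 2.1]{SheibaniSUN} finishes the proof.

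The only real subtlety is the nilpotence step, where one must observe that local nilpotence of $J(R)$ (rather than global nilpotence) suffices, because a single matrix involves only finitely many entries; once this is noted, everything else is routine and runs parallel to the weakly $2$-primal case handled in Theorem \ref{Theorem1.2}(1).
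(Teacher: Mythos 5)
Your proof is correct and takes essentially the same route as the paper: reduce modulo $J(R)$, use that $R/J(R)$ is semisimple to make $\mathbb{M}_n(R/J(R))$ unit-regular and hence strongly unit nil-clean, note that $\mathbb{M}_n(J(R))$ is nil because $J(R)$ is locally nilpotent, and lift through the nil ideal via \cite[Lemma 2.1]{SheibaniSUN}. The only difference is that you verify by hand (via Wedderburn--Artin and the finitely-many-entries argument) the two steps the paper disposes of by citation, namely Proposition \ref{proposition1.1} and \cite[Proposition 1.1]{Barati}.
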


\begin{proof}
As $R$ is a semi-local ring, the quotient ring $R/J(R)$ is semi-simple. It is well-known $R/J(R)$ is unit-regular. Now the isomorphism $\frac{\mathbb{M}_n(R)}{\mathbb{M}_n(J(R))} \cong \mathbb{M}_n(\frac{R}{J(R)})$, \cite[Proposition 1.1]{Barati}, and \cite[Lemma 2.1]{SheibaniSUN} give the desired result.   
\end{proof}

\begin{ex}
For a right (left) perfect ring $R$, the full matrix ring $\mathbb{M}_n(R)$ is strongly unit-nil clean.
\end{ex}

Let $A$ , $B$ be two rings and $M$ , $N$ be ($A,B$)-bimodule and ($B,A$)-bimodule, respectively. Also, we consider the bimodule homomorphisms $\phi:M \otimes_B N \rightarrow A$ and $\psi: N \otimes_A M \rightarrow B$ that apply to the following properties.
\[\forall b \in N \quad b \varphi(m \otimes n)=\psi(b \otimes m)n, \quad \forall a \in M \quad a \psi(n \otimes m)=\varphi(a \otimes n)m.\]

For $m \in M$ and $n \in N$, set $mn:=\varphi(m \otimes n)$ and $nm:= \psi(n \otimes m)$. So $b(mn)=(bm)n$ for all $b \in N$ and $a(nm)=(an)m$ for all $a \in M$. The algebraic structure $(A,B,M,N,\varphi,\psi)$ is said to be Morita context. The bimodule homomorphisms $\varphi$ and $\psi$ are called pairings.

Now the 4-tuple $R=\begin{pmatrix} A & M \\ N & B \end{pmatrix}=\left\{\begin{pmatrix} a & m \\ n & b\end{pmatrix}| a \in A, m \in M, n \in N, b \in B \right\}$ becomes to an associative ring with obvious matrix operations that is called a \emph{Morita context ring}. Denote two-side ideals $Im\phi$ and $Im\psi$ to $MN$ and $NM$ respectively and that are called the \emph{trace} ideals of the Morita context. If $MN=0$ and $NM=0$, then $R$ is called a Morita context with zero pairings.

\begin{thm}\label{Theorem1.8}
Let $R=\begin{pmatrix}
A & M \\
N & B
\end{pmatrix}$ be a Morita context such that $MN$ and $NM$ are nilpotent ideals. Then $R$ is (strongly) unit nil-clean if and only if the rings $A$ and $B$ are (strongly) unit nil-clean.  
\end{thm}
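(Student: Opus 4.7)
The plan is to exhibit $R$ as the sum of a unital subring isomorphic to $A \times B$ and a nilpotent ideal, so that Lemma~\ref{Lemma1.4} applies. Set
\[ S = \begin{pmatrix} A & 0 \\ 0 & B \end{pmatrix}, \qquad K = \begin{pmatrix} MN & M \\ N & NM \end{pmatrix}. \]
Then $S$ is a unital subring of $R$ (its identity coincides with that of $R$) and is isomorphic as a ring to $A \times B$, while every matrix in $R$ splits obviously into its diagonal part (in $S$) and off-diagonal part (in $K$), so $R = S + K$. A routine component-wise argument shows that $A \times B$ is (strongly) unit nil-clean exactly when each of $A$ and $B$ is. Consequently, once we establish that $K$ is a nil ideal of $R$, Lemma~\ref{Lemma1.4} completes the proof.

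That $K$ is a two-sided ideal of $R$ is a direct verification using the Morita context data: the trace ideals $MN$ and $NM$ are stable under the ring actions of $A$ and $B$, the bimodule structures keep $M$ and $N$ closed under multiplication by $A$ or $B$, and the associativity identities of the pairings, in particular $(MN)M = M(NM)$ and $N(MN) = (NM)N$, dispose of the cross terms.

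The main step, and the only delicate one, is to show that $K$ is nilpotent. Write $I = MN$ and $L = NM$, and suppose $I^p = 0 = L^q$. The plan is to establish, by induction on $k$, the containment
\[ K^{2k} \subseteq \begin{pmatrix} I^k & I^k M \\ N I^k & L^k \end{pmatrix}. \]
The base case $k=1$ is a direct computation of $K^2$, in which the small cross terms $M\cdot N \subseteq I$ and $(NM)(NM) \subseteq L$ get absorbed by the diagonal entries. The inductive step repeatedly uses the associativity identities to collapse mixed products: for instance $(I^k M)(NI) = I^{k+2}$ and $(N I^k)(IM) = L^{k+2}$, so both diagonal entries of $K^{2(k+1)}$ stay within the promised shape. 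Taking $k = \max\{p, q\}$ forces all four entries to vanish, so $K^{2k} = 0$, and thus $K$ is a nilpotent (hence nil) ideal. With this, Lemma~\ref{Lemma1.4} delivers the theorem.
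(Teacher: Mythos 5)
Your proof is correct and follows essentially the same route as the paper: the identical decomposition $R=S+K$ with the diagonal unital subring $S\cong A\times B$ and the ideal $K=\left(\begin{smallmatrix} MN & M \\ N & NM\end{smallmatrix}\right)$, reduced to Lemma~\ref{Lemma1.4}. The only difference is that you verify the nilpotency of $K$ directly (and your induction checks out), whereas the paper simply cites the proof of an earlier result for that fact.
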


\begin{proof}
 We have $R=S+K$, where 
$S=\begin{pmatrix}
A & 0 \\
0 & B
\end{pmatrix}$ is an unital subring of $R$ and 
$K=\begin{pmatrix}
MN & M \\
N & NM
\end{pmatrix}$ is a nilpotent ideal of $R$ (see the proof of \cite[Theorem 5.6]{Barati2023}). By Lemma \ref{Lemma1.4}, $R$ is (strongly) unit nil-clean if and only if $A$ and $B$ are (strongly) unit nil-clean.         
\end{proof}

The Morita context $R=\begin{pmatrix} A & M \\ 0 & B \end{pmatrix}$ 
is called a {\it formal triangular matrix ring} and denoted by $T(A,B,M)$.

\begin{cor}\label{Corollary1.7}
Let $A$, $B$ be two rings and let $M$ be an $(A,B)$-bi-module. Then the formal triangular matrix ring $T(A,B,M)$ is (strongly) unit nil-clean if and only if $A$ and $B$ are (strongly) unit nil-clean rings.
\end{cor}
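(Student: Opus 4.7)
The plan is to deduce this directly from Theorem \ref{Theorem1.8}. Observe that the formal triangular matrix ring $T(A,B,M)$ is precisely the Morita context
\[
R = \begin{pmatrix} A & M \\ N & B \end{pmatrix}
\]
in the special case $N = 0$, with both pairings $\varphi \colon M \otimes_B N \to A$ and $\psi \colon N \otimes_A M \to B$ necessarily the zero maps. In particular, the trace ideals are $MN = \operatorname{Im}\varphi = 0$ and $NM = \operatorname{Im}\psi = 0$, which are trivially nilpotent ideals (indeed, their first power already vanishes).

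Having placed $T(A,B,M)$ into the hypothesis of Theorem \ref{Theorem1.8}, I would simply invoke that theorem: it yields that $R = T(A,B,M)$ is (strongly) unit nil-clean if and only if both $A$ and $B$ are (strongly) unit nil-clean, which is exactly the statement of the corollary. The only mild verification worth spelling out is that a formal triangular matrix ring really does fit the Morita context framework of the previous section with $N = 0$ and zero pairings, so that the decomposition $R = S + K$ with $S = \begin{pmatrix} A & 0 \\ 0 & B \end{pmatrix}$ and $K = \begin{pmatrix} 0 & M \\ 0 & 0 \end{pmatrix}$ indeed produces a nilpotent ideal $K$ (here $K^2 = 0$) and an unital subring $S$.

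There is essentially no obstacle: the proof is a one-line reduction to Theorem \ref{Theorem1.8}. The work has already been done there via Lemma \ref{Lemma1.4}, and the triangular case is the cleanest instance since the nilpotency index of $K$ is as small as possible.
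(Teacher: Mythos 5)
Your proposal is correct and matches the paper's intended argument exactly: the paper states this corollary as an immediate consequence of Theorem \ref{Theorem1.8}, obtained by taking $N=0$ so that the trace ideals $MN$ and $NM$ vanish and are trivially nilpotent. Your extra remark that $K=\begin{pmatrix} 0 & M \\ 0 & 0 \end{pmatrix}$ satisfies $K^2=0$ is a harmless and accurate elaboration of the same route.
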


The trivial extension of $A$ by an ($A,A$)-bimodule $M$ is the subring $\{\begin{pmatrix}
a & m \\
0 & a
\end{pmatrix}| a \in A , m \in M \}$ of $T(A,A,M)$ and denote it by $A \propto M$.

\begin{prop}
Let $A$ be a ring and let $M$ be a ($A, A$)-bimodule. Then the trivial extension $A \propto M$ is (strongly) unit nil-clean if and only if $A$ is (strongly) unit nil-clean.  
\end{prop}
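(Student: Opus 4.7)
The plan is to realize the trivial extension in the form $S+K$ covered by Lemma \ref{Lemma1.4} and then invoke that lemma directly, just as was done for Morita contexts in Theorem \ref{Theorem1.8}. Concretely, I would set
\[
S = \left\{ \begin{pmatrix} a & 0 \\ 0 & a \end{pmatrix} : a \in A \right\}, \qquad
K = \left\{ \begin{pmatrix} 0 & m \\ 0 & 0 \end{pmatrix} : m \in M \right\},
\]
and verify three things: (a) $S$ is a unital subring of $A \propto M$ isomorphic to $A$; (b) $K$ is a two-sided ideal of $A \propto M$; (c) $A \propto M = S + K$ (in fact a direct sum of additive groups).

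Step (a) is immediate from the definition of the product in $A\propto M$, since the diagonal copies embed as a subring with the same identity. For (b), a short matrix computation using the bimodule structure shows
\[
\begin{pmatrix} a & m' \\ 0 & a \end{pmatrix}\begin{pmatrix} 0 & m \\ 0 & 0 \end{pmatrix}
=\begin{pmatrix} 0 & am \\ 0 & 0 \end{pmatrix}, \qquad
\begin{pmatrix} 0 & m \\ 0 & 0 \end{pmatrix}\begin{pmatrix} a & m' \\ 0 & a \end{pmatrix}
=\begin{pmatrix} 0 & ma \\ 0 & 0 \end{pmatrix},
\]
both of which lie in $K$. The same computation gives $K^2=0$, so $K$ is a (square-zero, hence) nil ideal. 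Step (c) is obvious from the decomposition $\begin{pmatrix} a & m \\ 0 & a \end{pmatrix}=\begin{pmatrix} a & 0 \\ 0 & a \end{pmatrix}+\begin{pmatrix} 0 & m \\ 0 & 0 \end{pmatrix}$.

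With (a)--(c) in hand, Lemma \ref{Lemma1.4} applies and yields that $A \propto M$ is (strongly) unit nil-clean if and only if $S$ is, and since $S \cong A$ this finishes the proof. There is essentially no obstacle here; the only point that requires care is checking that $K$ is closed under both left and right multiplication by elements of $A \propto M$, which uses both the left and right $A$-module structures on $M$.
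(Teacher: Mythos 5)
Your proof is correct and follows essentially the same route as the paper: the paper regards $A \propto M$ as a graded ring $T_0 \oplus T_1$ with $T_0 \cong A$ and $T_1 = \begin{pmatrix} 0 & M \\ 0 & 0 \end{pmatrix}$ and cites Theorem \ref{Theorem1.5}, which is itself proved by exactly the decomposition $S + K$ with $K$ nil and an appeal to Lemma \ref{Lemma1.4}. You have simply inlined that intermediate step, applying Lemma \ref{Lemma1.4} directly to the same subring and square-zero ideal, which is a perfectly valid (and slightly more self-contained) presentation of the same argument.
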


\begin{proof}
We can consider $T=A \propto M$ as a graded ring with $T=T_0 \oplus T_1$, where $T_0=\{\begin{pmatrix}
a & 0 \\
0 & a
\end{pmatrix}| a \in R\} \cong A$ and $T_1=\begin{pmatrix}
0 & M \\
0 & 0
\end{pmatrix}$. Now Theorem \ref{Theorem1.5} gives the desired result. 
\end{proof}

Given a ring $R$ and a central element $s$ of $R$, the $4$-tuple $\begin{pmatrix} R & R \\ R & R \end{pmatrix}$ becomes a ring with addition defined componentwise and with multiplication defined by
\[\begin{pmatrix} a_1 & x_1 \\ y_1 & b_1 \end{pmatrix} \begin{pmatrix} a_2 & x_2 \\ y_2 & b_2 \end{pmatrix} =
\begin{pmatrix} a_1a_2+sx_1y_2 & a_1x_2+x_1b_2 \\ y_1a_2+b_1y_2 & sy_1x_2+b_1b_2 \end{pmatrix}. \]
This ring is denoted by $K_s(R)$. A Morita context $\begin{pmatrix} A & M \\ N & B \end{pmatrix}$ with $A=B=M=N=R$ is called a {\it generalized matrix ring} over $R$. It was observed in \cite{KrylovGMR} that a ring $S$ is a generalized matrix ring over $R$ if and only if $S=K_s(R)$ for some $s\in \mathrm{C}(R)$. Here $MN=NM=sR$, so that $MN \subseteq J(A) \Leftrightarrow s \in J(R)$, $NM \subseteq J(B) \Leftrightarrow s \in J(R)$, and $MN,NM$ are nilpotent $\Leftrightarrow s$ is a nilpotent. 
Thus, Theorem \ref{Theorem1.8} has the following consequences, too.

\begin{cor}
Let $R$ be a ring and $s \in C(R) \cap Nil(R)$. Then the generalized matrix ring $K_s(R)$ is (strongly) unit nil-clean if and only if $R$ is (strongly) unit nil-clean.
\end{cor}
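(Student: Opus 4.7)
The plan is to recognize this corollary as an immediate specialization of Theorem \ref{Theorem1.8} applied to the generalized matrix ring $K_s(R)$, once we verify that the trace ideals are nilpotent.

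First, I would identify $K_s(R)$ with the Morita context $\begin{pmatrix} A & M \\ N & B \end{pmatrix}$ where $A=B=M=N=R$, as recalled in the paragraph preceding the corollary. Under this identification, the pairings $\varphi$ and $\psi$ both send $(x,y) \mapsto sxy$, so the trace ideals are $MN = NM = sR$. This is the crucial computation, but it is already recorded in the preceding discussion.

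Next, I would verify that $sR$ is a nilpotent ideal of $R$. Since $s \in \mathrm{Nil}(R)$, there exists $k \in \mathbb{N}$ with $s^k = 0$. Because $s \in C(R)$ is central, we have $(sR)^k = s^k R^k = 0$, so $sR$ is nilpotent (this is precisely the equivalence $MN, NM \text{ nilpotent} \Leftrightarrow s \text{ nilpotent}$ noted in the paragraph above the corollary).

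Finally, with the hypotheses of Theorem \ref{Theorem1.8} verified, that theorem yields $K_s(R)$ is (strongly) unit nil-clean if and only if $A$ and $B$ are (strongly) unit nil-clean; since $A = B = R$, this collapses to the desired equivalence. There is no real obstacle here, since all the work has already been done in Theorem \ref{Theorem1.8} and in the preparatory discussion of $K_s(R)$; the proof amounts to quoting those results.
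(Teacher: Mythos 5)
Your proof is correct and matches the paper's intended argument exactly: the paper derives this corollary from Theorem \ref{Theorem1.8} via the identification $MN=NM=sR$ and the observation that $sR$ is nilpotent when $s$ is a central nilpotent. Nothing further is needed.
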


For $n \geq 2$ and  $s\in C(R)$, the $n \times n$ formal matrix ring over $R$, associated with $s$ denoted by $\mathbb{M}_n(R;s)$, can be defined same as in \cite{TangZhou}. Now, we have the following result, the proof of which is fully similar to that of \cite[Theorem 2.18]{Barati2023}.

\begin{cor}
Let $R$ be a ring and $s \in C(R) \cap Nil(R)$. Then $\mathbb{M}_n(R;s)$, the formal matrix ring associated with $s$,  is (strongly) unit nil-clean if and only if the ring $R$ is (strongly) unit nil-clean.
\end{cor}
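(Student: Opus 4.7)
The plan is to mimic the strategy used in the proof of Theorem \ref{Theorem1.8}, reducing the statement to Lemma \ref{Lemma1.4} by exhibiting $\mathbb{M}_n(R;s)$ as $S+K$ with $S$ a unital subring and $K$ a nil (in fact nilpotent) ideal. First I would let $S$ be the subring of diagonal matrices $\mathrm{diag}(r_1,\ldots,r_n)$ with $r_i\in R$. Because the multiplication rule of $\mathbb{M}_n(R;s)$ introduces powers of $s$ only when ``off-diagonal'' indices are involved, products of diagonals stay diagonal with no factors of $s$ appearing, so $S$ is a genuine unital subring isomorphic to the product ring $R^n$.

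Next, I would let $K$ be the set of formal matrices whose off-diagonal entries are arbitrary in $R$ and whose diagonal entries lie in $sR$. Clearly $\mathbb{M}_n(R;s)=S+K$ as additive groups. Checking that $K$ is a two-sided ideal uses two facts: multiplying an element of $K$ by a diagonal element keeps off-diagonal entries off-diagonal and keeps the $sR$-structure on the diagonal, while multiplying two elements of $K$ produces a matrix whose diagonal entries pick up factors of $s$ (either directly, because some factor already contributed an element of $sR$ on the diagonal, or through the formal-matrix multiplication rule which inserts $s$ whenever one ``travels off the diagonal and back''). Thus $K\cdot \mathbb{M}_n(R;s)\subseteq K$ and $\mathbb{M}_n(R;s)\cdot K\subseteq K$.

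The technical heart is showing $K$ is nilpotent. Since $s\in \mathrm{Nil}(R)$, say $s^t=0$, one argues that in any sufficiently long product of elements of $K$ every entry is a sum of terms carrying at least $t$ copies of $s$ as a central factor, hence the product is zero. This is the same bookkeeping as in the proof of \cite[Theorem 2.18]{Barati2023}: each application of the multiplication formula either contributes a power of $s$ or keeps one on the diagonal, and after enough steps one accumulates $s^t=0$. This step will be the main obstacle, since it requires tracking the combinatorics of how $s$ is inserted under the formal matrix product; once done, $K$ is a nil ideal.

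Finally, Lemma \ref{Lemma1.4} gives that $\mathbb{M}_n(R;s)$ is (strongly) unit nil-clean if and only if $S\cong R^n$ is. To close the loop, I would observe that (strongly) unit nil-cleanness passes to and from finite direct products: a unit in $R^n$ is a tuple of units, and an element of $R^n$ is (strongly) nil-clean iff each coordinate is (taking the maximum of the individual nilpotence indices). Therefore $R^n$ is (strongly) unit nil-clean iff $R$ is, finishing the proof.
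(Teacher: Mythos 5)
Your argument is sound, and it is genuinely different from what the paper offers: the paper gives no proof at all for this corollary, only a pointer to \cite[Theorem 2.18]{Barati2023}, which handles $\mathbb{M}_n(R;s)$ by peeling off one row and column at a time, i.e.\ by induction on $n$ through the Morita-context decomposition and (the analogue of) Theorem \ref{Theorem1.8}. You instead produce the decomposition in one shot: $S$ the diagonal subring (isomorphic to $R^n$, since the Tang--Zhou exponents vanish on terms with $k=i=j$) and $K$ the ideal of formal matrices whose diagonal entries lie in $sR$, followed by a single application of Lemma \ref{Lemma1.4}. This avoids the induction and makes the role of $s\in \mathrm{Nil}(R)$ visible, at the price of the combinatorial step you yourself flag. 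That step does close: in an $m$-fold product of elements of $K$, the $(i,j)$ entry is a sum over paths $i=i_0,i_1,\dots,i_m=j$ of terms $s^{e}a^{(1)}_{i_0i_1}\cdots a^{(m)}_{i_{m-1}i_m}$, where $e$ is determined by the multiplication exponents $\delta_{ikj}$ of \cite{TangZhou}; these satisfy the cocycle identity forced by associativity, so $e$ telescopes to (number of moving steps of the path) minus at most $1$, while every stationary step contributes a factor from $sR$ because the corresponding entry is a diagonal entry of an element of $K$. Hence every term carries at least $m-1$ central factors of $s$, and $K^{t+1}=0$ once $s^t=0$. (Even with a cruder count one gets at least $(m-1)/2$ factors of $s$, which suffices.) The remaining steps --- that $K$ is a two-sided ideal, that $S$ is unital, and that (strongly) unit nil-cleanness passes componentwise through the finite product $R^n$ --- are routine, exactly as you say. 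So your route is a legitimate, and arguably cleaner, replacement for the cited inductive argument; just make sure the nilpotency count is written out rather than gestured at, since it is the only non-formal part of the proof.
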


\section{Group Rings}
In the following theorem, we answer to \textbf{Question 1} with the least possible restriction.

\begin{thm}\label{Theorem2.2}
Let $R$ be a ring and $G$ be a group. If $RG$ is unit-regular then $R$ is a unit-regular ring, $G$ is a locally finite group and the order of every element of $G$ is a unit in $R$. The converse is valid when $R$ is a ring with the noetherian prime factors.  
\end{thm}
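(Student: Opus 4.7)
The plan is to treat the two implications separately.

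For the forward implication, I would first observe that unit-regularity implies regularity, which triggers Connell's theorem (Theorem \ref{Theorem0.1}) and immediately yields that $R$ is regular, $G$ is locally finite, and every $n\in o(G)$ is a unit in $R$; in particular, the order of every element of $G$ is a unit. To upgrade the regularity of $R$ to unit-regularity, I would invoke the augmentation homomorphism $\varepsilon:RG\to R$, $\sum_g r_g g\mapsto \sum_g r_g$. This is a surjective ring homomorphism, and any ring homomorphism sends units to units; applied to a witness $a=aua$ with $u\in U(RG)$ for $a=\varepsilon^{-1}(r)$, this gives $r=r\varepsilon(u)r$ with $\varepsilon(u)\in U(R)$, so $R$ is unit-regular.

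For the converse, assume $R$ is unit-regular with noetherian prime factors, $G$ is locally finite, and the order of every element of $G$ is a unit in $R$. By Cauchy's theorem, for any finite subgroup $H\le G$ every prime dividing $|H|$ occurs as the order of some element of $H$ and is therefore a unit in $R$; hence $|H|$ itself is a unit, and all three Connell hypotheses hold, so $RG$ is at least regular. The substantive task is to boost this to unit-regularity. My plan is to combine two reductions: (i) express $RG$ as the directed union $\varinjlim_H RH$ over the finite subgroups $H$ of $G$ and use that a direct limit of unit-regular rings is unit-regular (Goodearl \cite{Goodearl}); and (ii) show each $RH$ with $H$ finite and $|H|$ a unit in $R$ is unit-regular. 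For (ii) I would exploit the noetherian prime factor assumption: every prime quotient $R/P$ is a noetherian regular (prime) ring, hence simple artinian, so by Maschke the finite group ring $(R/P)H$ is semisimple artinian and in particular unit-regular. Since $R$ is regular and hence semiprime, $R\hookrightarrow \prod_P R/P$ is a subdirect embedding, giving a compatible embedding $RH\hookrightarrow \prod_P (R/P)H$ into a (componentwise) unit-regular ring; the noetherian control on each factor is what makes it possible to assemble componentwise witnesses into a global unit of $RH$.

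The main obstacle is precisely this last step in (ii): propagating unit-regularity down the subdirect embedding $RH\hookrightarrow\prod_P(R/P)H$, since unit-regularity does not descend to arbitrary subrings. This is the only place where the noetherian prime factor hypothesis is genuinely needed, and it is almost surely why the converse requires this extra condition beyond the three Connell-type hypotheses; without a bound of noetherian type on the prime factors, the simple artinian quotients can have unbounded matrix degrees and one cannot in general glue the local unit witnesses into a single unit of $RH$.
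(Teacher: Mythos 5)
Your forward implication is fine and coincides with the paper's: $R$ is a homomorphic image of $RG$ under the augmentation map, unit-regularity passes to quotients, and Connell's theorem supplies local finiteness together with the invertibility of $|\langle g\rangle|=o(g)$. The converse, however, contains two genuine gaps, both at the points you yourself flag as delicate. First, step (i) rests on the assertion that a directed union of unit-regular rings is unit-regular. This is not a theorem you can cite from \cite{Goodearl}: in contrast with plain regularity, closure of unit-regularity under directed unions is a well-known open question (it is listed among the open problems in Goodearl's book), so the reduction from $RG$ to the finite group rings $RH$ is not available in this form. Second, step (ii) is not an argument at all: unit-regularity does not descend along subdirect embeddings (consider $\mathbb{Z}\hookrightarrow\prod_{p}\mathbb{Z}/p\mathbb{Z}$, a subdirect product of fields that is not even regular), and saying that ``the noetherian control makes it possible to assemble componentwise witnesses into a global unit of $RH$'' does not explain how a family of units of the rings $(R/P)H$ is to be lifted to a single unit of $RH$. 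As written, the converse is not proved.

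The paper's route avoids both obstacles by passing through strong $\pi$-regularity rather than through finite subgroups and subdirect products. After the same Cauchy-type observation that $|H|$ is a unit in $R$ for every finite subgroup $H$ (so that Connell's theorem makes $RG$ regular), the noetherian hypothesis is used as follows: each prime factor $R/P$ is a unit-regular noetherian ring, hence artinian by \cite[Corollary 4.26]{lam2013first}; by \cite[Theorem 3.3]{ChinChen} this forces $RG$ to be strongly $\pi$-regular; and a ring that is simultaneously regular and strongly $\pi$-regular is unit-regular by \cite[Theorem 5.8]{GoodearlMenal}. If you want to repair your outline, the missing ingredient is precisely a result of this last type --- deducing unit-regularity of $RG$ from regularity plus a $\pi$-regularity condition --- rather than any limit or gluing argument.
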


\begin{proof}
(1) $\Rightarrow$ (2) \quad 
Being a homomorphic image of $RG$, $R$ is unit-regular. On the other hand, $G$ is a locally finite group and the order of any finite subgroup of $G$ is a unit in $R$ by \cite[Theorem 3]{Connell}. For $g \in G$, put $H=<g>$. Then $H$ is finite and $|H|=o(g)$ is a unit in $R$.

(2) $\Rightarrow$ (1) \quad
Suppose $H$ is a finite subgroup of $G$. Assuming $|H|={(p_1)}^{\alpha_1} \cdots {(p_k)}^{\alpha_k}$, it follows from Cauchy's Theorem that $H$ contains the elements such as $g_1$, $g_2$, $\cdots$ , $g_k$ of orders $p_1$, $p_2$ $\cdots$ $p_k$, respectively. So $|H|={(o(g_1))}^{\alpha_1} \cdots {(o(g_k))}^{\alpha_k}$. As $o(g_i)$ is reversible in $R$, $|H|$ is so. Assume $P$ is an arbitrary prime ideal of $R$. As $R/P$ is a unit-regular noetherian ring, it is artinian by \cite[Corollary 4.26]{lam2013first}. Therefore, $RG$ is strongly $\pi$-regular ring by \cite[Theorem 3.3]{ChinChen} and it is regular by Theorem \ref{Theorem0.1}. Now \cite[Theorem 5.8]{GoodearlMenal} implies that $RG$ is unit regular. 
\end{proof}

\begin{cor}\label{Corollary2.1}
Suppose $R$ is a ring with the one of the following properties. 

a. $Nil(R)$ is an ideal or a commutative set. 

b. semi-abelian

c. semilocal

 then the following statements are equivalent: 
\begin{enumerate}
\item [(1)]
$RG$ is  unit-regular ring.
\item [(2)]
$R$ is a unit-regular and $G$ is a locally finite group whose the order of every element is a unit in $R$.
\end{enumerate} 
\end{cor}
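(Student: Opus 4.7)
The direction $(1)\Rightarrow(2)$ is immediate from Theorem \ref{Theorem2.2} and requires no extra hypothesis on $R$, so the content of the corollary lies entirely in $(2)\Rightarrow(1)$. By the converse half of Theorem \ref{Theorem2.2}, it suffices to check that a unit-regular ring $R$ satisfying any one of (a), (b), (c) has noetherian prime factors. My plan is to treat each case separately and in each to show that $R$ is in fact either semisimple artinian or strongly regular, after which every prime factor is simple artinian or a division ring, hence noetherian.

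Case (c) is the easiest: if $R$ is unit-regular then $J(R)=0$, and if $R$ is also semilocal then $R\cong R/J(R)$ is semisimple artinian, so every prime factor is simple artinian. Case (a), first subcase, is also quick: if $\mathrm{Nil}(R)$ is an ideal then it is a nil ideal of a regular ring, and the standard trick --- for $a$ in a nil ideal, write $a=axa$, note that $ax$ is simultaneously nilpotent (it lies in the ideal) and idempotent, so $ax=0$ and hence $a=0$ --- forces $\mathrm{Nil}(R)=0$. A reduced regular ring is strongly regular, and its prime factors are division rings.

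The subcase of (a) in which $\mathrm{Nil}(R)$ is only assumed to be a commutative set, and case (b) (semi-abelian), are the genuine work. For the commuting-nilpotents version I would take $a\in R$ with $a^{2}=0$ and, using $a=axa$, form the two idempotents $e=xa$ and $f=ax$, noting that $ae=a$, $ea=0$, $fa=a$, $af=0$. The plan is to exhibit two nilpotents (such as $a$ and $(1-f)r e$ or similar off-diagonal expressions built from an arbitrary $r\in R$) whose commutator, forced to vanish by the hypothesis, yields $a=0$; this gives reducedness and hence strong regularity. For (b), semi-abelianness means every idempotent is left or right semicentral, and I would use this on an arbitrary idempotent $e$ together with the Peirce decomposition $r=ere+er(1-e)+(1-e)re+(1-e)r(1-e)$ to kill the off-diagonal pieces $er(1-e)$ and $(1-e)re$, showing that idempotents are actually central, i.e.\ $R$ is abelian, so again strongly regular.

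The main obstacle is precisely this reduction from the weaker symmetry hypotheses (nilpotents merely commute; idempotents merely semicentral) to full reducedness/abelianness in the presence of regularity; each amounts to a short but slightly delicate Peirce-type computation. Once reducedness is in hand in cases (a) and (b) and semisimplicity in case (c), every prime factor of $R$ is noetherian, and Theorem \ref{Theorem2.2} delivers $RG$ unit-regular, completing $(2)\Rightarrow(1)$.
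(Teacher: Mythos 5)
Your overall architecture coincides with the paper's: $(1)\Rightarrow(2)$ is Theorem \ref{Theorem2.2} applied directly, and for $(2)\Rightarrow(1)$ both you and the paper verify the ``noetherian prime factors'' hypothesis of Theorem \ref{Theorem2.2} by showing that $R$ is reduced in cases (a) and (b) (so that every prime factor is a division ring) and semisimple in case (c). Case (c) and the first subcase of (a) are handled correctly: the paper gets $\mathrm{Nil}(R)=0$ from $\mathrm{Nil}(R)\subseteq J(R)=0$ when $\mathrm{Nil}(R)$ is an ideal, and your idempotent-equals-nilpotent trick is an equally valid elementary substitute.

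The gap lies exactly in the two places you flag as ``the genuine work,'' which you leave as plans rather than proofs. The paper does not perform these computations at all: it cites \cite[Theorem 2.8]{KhuranaMarks} for ``$\mathrm{Nil}(R)$ commutative implies $R$ reduced'' and \cite[Theorem 3.11]{ChenW} for ``semi-abelian implies $\mathrm{Nil}(R)$ is an ideal,'' after which $\mathrm{Nil}(R)\subseteq J(R)=0$ again. In the commuting-nilpotents case your idea is sound but the named witnesses are wrong: with $e=xa$ and $f=ax$ the element $(1-f)re$ need not be square-zero, since that requires $e(1-f)=0$, i.e.\ $ef=e$, which fails in general. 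The computation does close if you work with a single idempotent: from $a^2=0$, $a=axa$, $f=ax$ one gets $a=fa(1-f)$, the elements $(1-f)rf$ are square-zero, and commutativity forces $a\cdot(1-f)rf\in fRf$ to equal $(1-f)rf\cdot a\in(1-f)R(1-f)$, hence both vanish; taking $r=x$ gives $f=axf=0$ and so $a=fa=0$. In the semi-abelian case your plan does not close as stated: semi-abelianness kills only one off-diagonal Peirce corner per idempotent ($e$ left semicentral gives $(1-e)Re=0$ but says nothing about $eR(1-e)$), and the natural next step of applying the hypothesis to the derived idempotent $g=e+er(1-e)$ settles matters only when $g$ falls on the opposite side of the left/right dichotomy from $e$; the remaining case is unaddressed. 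Either supply that argument or, as the paper does, invoke the known result that semi-abelian rings are $NI$, which reduces (b) to the subcase of (a) you have already completed.
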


\begin{proof}
Assume $R$ is unit-regular. If $R$ is an $NI$-ring then $Nil(R)=0$. If $R$ is semi-abelian $Nil(R)$  is an ideal by \cite[Theorem 3.11]{ChenW}. So $Nil(R)=J(R)=0$. If $Nil(R)$ is commutative then \cite[Theorem 2.8]{KhuranaMarks} ensures $R$ is reduced. However, $R$ is reduced. 
In a similar way to the argument presented in Theorem \ref{Theorem2.1}, it can be demonstrate that for any prime ideal $P$ of $R$, $R/P$ is a division ring. If $R$ is a semilocal unit-regular ring then it is semisimple. Consequently, the quotient ring $R/P$ is artinian for every prime ideal $P$ of $R$. Now Theorem \ref{Theorem2.2} completes the argument.
\end{proof}

\begin{cor}[{\cite[Corollary 2.5]{ChenLiZhou}}]\label{Corollary2.2}
If $R$ is a semisimple ring, then the following statements are equivalent:
\begin{enumerate}
\item [(1)]
$RG$ is unit regular.
\item [(2)]
$RG$ is regular
\item [(3)]
$G$ is locally finite and each $n \in o(G)$ is a unit in $R$, where $o(G)$ denotes the set of orders
of all finite subgroups of $G$.
\end{enumerate}
\end{cor}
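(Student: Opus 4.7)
The plan is to derive the corollary as a direct specialization of Theorem \ref{Theorem2.2} (or equivalently the semilocal case of Corollary \ref{Corollary2.1}), combined with Connell's theorem for the reverse direction. The point is that a semisimple ring $R$ satisfies all the structural hypotheses that make Theorem \ref{Theorem2.2} apply: by Wedderburn--Artin, $R$ is a finite direct product of matrix rings over division rings, hence is unit-regular (e.g.\ by Proposition \ref{proposition1.1} applied componentwise), is artinian (in particular noetherian), and has $J(R)=0$. Every prime factor $R/P$ is therefore an artinian (hence noetherian) unit-regular ring, so the ``noetherian prime factors'' hypothesis of Theorem \ref{Theorem2.2} is satisfied.

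First I would dispose of the easy implications. The implication (1) $\Rightarrow$ (2) is immediate, since any unit-regular ring is regular by definition. The implication (2) $\Rightarrow$ (3) is exactly Theorem \ref{Theorem0.1} (Connell) applied to the regular ring $RG$; note that under (2), $R$ itself is regular as a homomorphic image of $RG$, but this is automatic from the hypothesis that $R$ is semisimple.

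Next, for (3) $\Rightarrow$ (1), I would verify the remaining hypothesis of Theorem \ref{Theorem2.2}, namely that the order of \emph{every element} $g\in G$ is a unit in $R$. Given $g\in G$, the local finiteness of $G$ from (3) ensures $\langle g\rangle$ is a finite subgroup, so $o(g) = |\langle g\rangle| \in o(G)$, and condition (3) says this is a unit in $R$. With all hypotheses of Theorem \ref{Theorem2.2} in place (unit-regularity of $R$, local finiteness of $G$, invertibility of element orders, and the noetherian-prime-factors condition) we conclude that $RG$ is unit-regular.

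I do not expect any genuine obstacle here; the corollary is essentially a bookkeeping consequence of the main theorem. The only mild subtlety is the translation between ``every $n\in o(G)$ is a unit'' and ``the order of every element of $G$ is a unit,'' which is handled by the one-line observation above (and which was already carried out more elaborately via Cauchy's theorem inside the proof of Theorem \ref{Theorem2.2}). If preferred, one could package everything by simply citing Corollary \ref{Corollary2.1}(c), since semisimple rings are in particular semilocal.
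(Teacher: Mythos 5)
Your proposal is correct and follows the route the paper intends: the corollary is stated without its own proof precisely because it is the semisimple (hence semilocal, artinian, unit-regular) specialization of Theorem \ref{Theorem2.2} / Corollary \ref{Corollary2.1}(c), combined with Connell's Theorem \ref{Theorem0.1} for the equivalence with regularity. Your handling of the translation between ``each $n\in o(G)$ is a unit'' and ``the order of every element is a unit'' matches the Cauchy-theorem bookkeeping already done inside the proof of Theorem \ref{Theorem2.2}.
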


An element $a$ in a ring $R$ is called left morphic if there exists $b \in R$ such that $l_R(a) = Rb$ and
$l_R(b) = Ra$, where $l_R(a)$ denotes the left annihilator of $a$ in $R$. The ring $R$ is called left morphic if
every element of $R$ is left morphic. Analogously, one can define right morphic rings. A left and right morphic ring is called a morphic
ring. 
It follows from a well-known theorem of Erlich \cite{Ehrlich} that an element $a$ in R is unit regular if and only if $a$ is (von Neumann) regular and left morphic. Also, It was proved in \cite{NicholsonSanchez} that $\mathbb{Z}_n$ is morphic. Therefore, being regularity $\mathbb{Z}_n$ is equivalent to its unit-regularity. According to this, we will obtain the following simple and key lemma.

\begin{lem}\label{Lemma2.1}
$\mathbb{Z}_n$ is unit-regular if and only if $n$ is a product of the distinct primes. 
\end{lem}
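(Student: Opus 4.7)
The plan is to reduce unit-regularity to ordinary von Neumann regularity using the remark immediately preceding the lemma, and then to analyze regularity factor-by-factor via the Chinese Remainder Theorem.

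First I would invoke the Nicholson--Sánchez result (quoted in the paragraph above) which asserts that $\mathbb{Z}_n$ is morphic. Combined with Erlich's theorem (also cited in that paragraph), this yields that an element of $\mathbb{Z}_n$ is unit-regular exactly when it is regular, so $\mathbb{Z}_n$ is unit-regular if and only if it is von Neumann regular. This converts the problem into showing that $\mathbb{Z}_n$ is regular if and only if $n$ is squarefree.

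Next I would factor $n = p_1^{\alpha_1}\cdots p_k^{\alpha_k}$ and apply the Chinese Remainder Theorem to write
\[
\mathbb{Z}_n \;\cong\; \mathbb{Z}_{p_1^{\alpha_1}} \times \cdots \times \mathbb{Z}_{p_k^{\alpha_k}}.
\]
Since a finite direct product of rings is (von Neumann) regular if and only if each factor is regular, it suffices to determine when $\mathbb{Z}_{p^\alpha}$ is regular for a prime power. If $\alpha=1$, then $\mathbb{Z}_p$ is a field, hence regular. If $\alpha \geq 2$, then the element $p \in \mathbb{Z}_{p^\alpha}$ is nonzero but satisfies $pxp = p^2 x = 0$ for every $x \in \mathbb{Z}_{p^\alpha}$, so $p$ admits no regular inverse; thus $\mathbb{Z}_{p^\alpha}$ is not regular. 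Putting these two observations together, $\mathbb{Z}_n$ is regular precisely when every $\alpha_i = 1$, i.e., $n$ is a product of distinct primes.

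There is no real obstacle here; the only delicate point is making sure the equivalence ``unit-regular $\Leftrightarrow$ regular'' is invoked correctly for $\mathbb{Z}_n$ (this is exactly the content of the discussion preceding the lemma). A slightly slicker alternative for the backward direction would be to note that a commutative ring is regular if and only if it is reduced and zero-dimensional; since $\mathbb{Z}_n$ is automatically zero-dimensional (being finite), regularity reduces to being reduced, which is exactly the condition that $n$ be squarefree. Either route gives a short, clean proof.
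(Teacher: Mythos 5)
Your proof is correct in substance but takes a genuinely different route from the paper's, and it contains one small slip worth fixing. The paper also splits $n$ via the Chinese Remainder Theorem, but it handles the prime-power case by citing a counting theorem of Alkam: the number of unit-regular elements of $\mathbb{Z}_{p^r}$ is $p^r - p^{r-1} + 1$, so unit-regularity of the whole ring forces $p^{r-1}=1$ and hence $r=1$. You instead exploit the morphic/Erlich reduction (unit-regular $\Leftrightarrow$ regular for $\mathbb{Z}_n$) that the paper sets up in the paragraph before the lemma but does not actually use inside its proof, and then settle the prime-power case by an elementary hands-on argument. Your version is more self-contained and arguably closer to the paper's stated motivation; the paper's version outsources the key computation to a reference. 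The slip: your claim that $pxp = p^2x = 0$ for every $x \in \mathbb{Z}_{p^{\alpha}}$ is only true when $\alpha = 2$; for $\alpha \geq 3$ the element $p^2$ is nonzero, so $p^2x$ need not vanish. The conclusion still holds, and the repair is one line: if $pxp=p$ then $p(px-1)=0$ in $\mathbb{Z}_{p^{\alpha}}$, so $p^{\alpha-1}$ divides $px-1$; since $\alpha\geq 2$ this gives $p \mid px-1$ and hence $p \mid 1$, a contradiction. Your alternative closing remark (a finite commutative ring is regular iff it is reduced, which for $\mathbb{Z}_n$ is exactly squarefreeness of $n$) is fully correct and avoids the issue entirely.
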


\begin{proof}
To prove the statement, we first show that $\mathbb{Z}_{p^r}$ is unit-regular if and only if $r=1$, where $p$ is a prime. If $r=1$ then $\mathbb{Z}_p$ is, as a field, unit-regular. Assume $\mathbb{Z}_{p^r}$ is unit-regular. It follows from \cite[Theorem 3]{Alkam} that $p^r=p^r-p^{r-1}+1$. Thus $r=1$. Now suppose $n={p_1}^{r_1}{p_2}^{r_2}\cdots {p_k}^{r_k}$. Then $\mathbb{Z}_n\cong \mathbb{Z}_{{p_1}^{r_r}} \times \cdots \mathbb{Z}_{{p_k}^{r_k}}$. So, $\mathbb{Z}_n$ is unit-regular if and only if $\mathbb{Z}_{{p_i}^{r_i}}$ is unit-regular if and only if $r_i=1$. 
\end{proof}

\begin{thm}\label{Theorem2.3}
The group ring $\mathbb{Z}_n G$ is unit-regular if and only if $n$ is a product of the distinct primes and $G$ is a locally finite group whose the order of every element is relatively prime to $n$.
\end{thm}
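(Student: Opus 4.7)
The plan is to deduce Theorem \ref{Theorem2.3} by specializing Theorem \ref{Theorem2.2} to $R=\mathbb{Z}_n$, using Lemma \ref{Lemma2.1} as the translation tool. The only nontrivial book-keeping is to match the three conditions of Theorem \ref{Theorem2.2} with the three conditions in the present statement. To this end, I would first record the elementary observation that an integer $m$ is a unit in $\mathbb{Z}_n$ if and only if $\gcd(m,n)=1$; this turns the requirement ``the order of every element of $G$ is a unit in $\mathbb{Z}_n$'' into ``the order of every element of $G$ is relatively prime to $n$''.

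For the forward direction, I would assume $\mathbb{Z}_n G$ is unit-regular and invoke Theorem \ref{Theorem2.2} directly: it yields that $\mathbb{Z}_n$ is unit-regular, that $G$ is locally finite, and that the order of each element of $G$ is a unit in $\mathbb{Z}_n$. Lemma \ref{Lemma2.1} converts the first fact into ``$n$ is a product of distinct primes'', and the preceding remark converts the third fact into the coprimality condition. So this direction is essentially bookkeeping.

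For the converse, assume $n=p_1p_2\cdots p_k$ with distinct primes $p_i$, that $G$ is locally finite, and that every element of $G$ has order coprime to $n$. By Lemma \ref{Lemma2.1} the ring $\mathbb{Z}_n$ is unit-regular (explicitly, $\mathbb{Z}_n\cong\mathbb{Z}_{p_1}\times\cdots\times\mathbb{Z}_{p_k}$ is a finite product of fields). To apply the converse half of Theorem \ref{Theorem2.2} I need $\mathbb{Z}_n$ to have noetherian prime factors; but $\mathbb{Z}_n$ is itself finite, so every quotient $\mathbb{Z}_n/P$ is a finite field, which is trivially noetherian. Finally, the coprimality hypothesis again translates to ``the order of every element of $G$ is a unit in $\mathbb{Z}_n$'', so Theorem \ref{Theorem2.2} applies and yields that $\mathbb{Z}_nG$ is unit-regular.

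I do not expect a genuine obstacle here: the main content has already been packed into Theorem \ref{Theorem2.2} and Lemma \ref{Lemma2.1}, and the only ``hard'' ingredient — the noetherian prime factor hypothesis — is automatic because $\mathbb{Z}_n$ is finite. The entire proof should be short, essentially a dictionary exchange between the unit-regularity of $\mathbb{Z}_n$ and the arithmetic of $n$, coupled with the equivalence between invertibility mod $n$ and coprimality to $n$.
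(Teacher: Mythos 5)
Your proof is correct and takes essentially the same route as the paper: reduce to the unit-regularity of $\mathbb{Z}_n$ via Lemma \ref{Lemma2.1} and translate ``unit in $\mathbb{Z}_n$'' into coprimality with $n$. The only cosmetic difference is that the paper cites Corollary \ref{Corollary2.1} (the semilocal case, which applies since $\mathbb{Z}_n$ is finite) instead of verifying the noetherian-prime-factors hypothesis of Theorem \ref{Theorem2.2} directly as you do; both verifications are immediate from the finiteness of $\mathbb{Z}_n$.
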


\begin{proof}
By Corollary \ref{Corollary2.1}, $\mathbb{Z}_n G$ is unit-regular if and only if $\mathbb{Z}_n$ is unit-regular and $G$ is a locally finite group whose the order of every element is a unit in $\mathbb{Z}_n$. Lemma \ref{Lemma2.1} implies that $\mathbb{Z}_n$ is unit-regular if and only if $n$ is a product of the distinct primes. Also, the order of $g \in G$ is a unit in $\mathbb{Z}_n$ if and only if $(n,o(g))=1$. 
\end{proof}

\begin{cor}
For the finite group $G$, the group ring $\mathbb{Z}_nG$ is unit-regular if and only if $n$ is a product of the distinct primes and $(n,|G|)=1$.
\end{cor}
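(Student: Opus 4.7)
The plan is to derive this directly from Theorem \ref{Theorem2.3}, reducing the only nontrivial content to translating the condition ``every element of $G$ has order coprime to $n$'' into the single condition $(n,|G|)=1$ when $G$ is finite.

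First I would invoke Theorem \ref{Theorem2.3}. Since $G$ is finite it is automatically locally finite, so the theorem tells us that $\mathbb{Z}_n G$ is unit-regular if and only if $n$ is a product of distinct primes and the order $o(g)$ of every $g \in G$ is relatively prime to $n$. Thus it suffices to show that, for a finite group $G$, the condition
\[
\bigl(n, o(g)\bigr) = 1 \quad \text{for every } g \in G
\]
is equivalent to $(n, |G|) = 1$.

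For the $(\Leftarrow)$ direction I would use Lagrange's theorem: if $(n, |G|) = 1$, then since $o(g)$ divides $|G|$ for each $g \in G$, any common prime divisor of $n$ and $o(g)$ would also divide $|G|$, forcing $(n, o(g)) = 1$. For the $(\Rightarrow)$ direction I would argue by contrapositive using Cauchy's theorem: suppose $(n, |G|) \neq 1$, and let $p$ be a common prime divisor. Since $p \mid |G|$ and $G$ is finite, Cauchy's theorem produces an element $g \in G$ with $o(g) = p$; but then $p \mid (n, o(g))$, contradicting the assumption on orders.

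Combining these two implications with Theorem \ref{Theorem2.3} yields the corollary immediately. There is no real obstacle here---the essential content (unit-regularity of $\mathbb{Z}_n$ for $n$ a product of distinct primes and the coprimality of element orders with $n$) is already packaged in Theorem \ref{Theorem2.3} and Lemma \ref{Lemma2.1}; only the Lagrange/Cauchy translation remains, and the finiteness of $G$ is precisely what makes Cauchy's theorem available in the backward direction.
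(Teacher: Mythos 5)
Your proof is correct and follows the route the paper intends: the corollary is stated without proof as an immediate consequence of Theorem \ref{Theorem2.3}, and your Lagrange/Cauchy argument correctly supplies the only missing detail, namely that for finite $G$ the condition $(n,o(g))=1$ for all $g\in G$ is equivalent to $(n,|G|)=1$. This matches the same Cauchy-type reasoning the paper already uses in the proof of Theorem \ref{Theorem2.2}, so there is nothing to add.
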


\begin{prop}
For a prime $p$, let $R$ be a (strongly) unit nil-clean ring with $p \in Nil(R)$ and $G$ be a locally finite $p$-group. Then the group ring $RG$ is (strongly) unit nil-clean.
\end{prop}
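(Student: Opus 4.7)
The natural plan is to invoke Lemma \ref{Lemma1.4} with the unital subring $S = R \cdot 1_G \subseteq RG$ (the ``constant'' copy of $R$) and the ideal $K = \omega(RG)$, where $\omega(RG) = \ker \varepsilon$ is the augmentation ideal (with $\varepsilon \colon RG \to R$, $\varepsilon(\sum r_g g) = \sum r_g$). Since $RG = R \cdot 1_G + \omega(RG)$ and $R \cdot 1_G \cong R$, it suffices to show that $\omega(RG)$ is a nil ideal; Lemma \ref{Lemma1.4} will then transfer (strong) unit nil-cleanness between $R$ and $RG$, which is exactly what we need.

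To verify that $\omega(RG)$ is nil, pick $x \in \omega(RG)$ and let $H$ be the subgroup of $G$ generated by the (finite) support of $x$. By local finiteness $H$ is finite, and as a subgroup of the $p$-group $G$ it is itself a finite $p$-group; moreover $x \in \omega(RH)$. Thus the task reduces to proving that $\omega(RH)$ is nilpotent whenever $H$ is a finite $p$-subgroup of $G$. For this I will reduce modulo $p$: since $p$ is central in $R$ (being a sum of copies of $1$) and $p^m = 0$ for some $m$, the ideal $pRH$ of $RH$ satisfies $(pRH)^m = p^m RH = 0$, and $RH/pRH \cong (R/pR)H$ is a group ring over the $\mathbb{F}_p$-algebra $R/pR$. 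Combined with the classical fact that $\omega(\mathbb{F}_p H)$ is a nilpotent ideal of $\mathbb{F}_p H$ for any finite $p$-group $H$, together with the observation that $\omega((R/pR)H) = (R/pR) \cdot \omega(\mathbb{F}_p H)$ with the $\mathbb{F}_p H$-factor commuting with the scalars inside the group ring, one obtains $\omega((R/pR)H)^N = 0$ for some $N$. Lifting back, $\omega(RH)^N \subseteq pRH$, and therefore $\omega(RH)^{Nm} \subseteq (pRH)^m = 0$.

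The only nontrivial ingredient is the classical nilpotence of $\omega(\mathbb{F}_p H)$ for finite $p$-groups $H$; everything else is routine bookkeeping---a support argument to descend from $G$ to a finite subgroup, the centrality of $p$ to produce the nilpotent ``first approximation'' $pRH$, and a single invocation of Lemma \ref{Lemma1.4} at the end to read off (strong) unit nil-cleanness of $RG$ from that of $R$.
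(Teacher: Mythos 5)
Your proof is correct and follows essentially the same route as the paper: both rest on the fact that the augmentation ideal of $RG$ is nil and then transfer (strong) unit nil-cleanness through it (the paper via the isomorphism $RG/\Delta(RG)\cong R$ together with the nil-ideal lifting lemma, you via Lemma \ref{Lemma1.4} with $S=R\cdot 1_G$ and $K=\omega(RG)$, which amounts to the same thing). The only difference is that you prove the nilness of the augmentation ideal from scratch (support reduction to a finite $p$-subgroup, reduction mod $p$, and the classical nilpotence of $\omega(\mathbb{F}_pH)$ for a finite $p$-group $H$), whereas the paper simply cites Connell's Theorem 9 for this fact.
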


\begin{proof}
By \cite[Theorem 9]{Connell}, the augmentation ideal $\Delta(RG)$ is a nilpotent ideal of $RG$. It follow from the isomorphism $\frac{RG}{\Delta(RG)} \cong R$ and \cite[Lemma 3.1]{Sheibani} (\cite[Lemma 2.1]{SheibaniSUN}) that $RG$ is (strongly) unit nil-clean.
\end{proof}

\noindent In the sequel, we try to give a partial answer to \textbf{Question 2}.

\begin{thm}\label{Theorem2.1}
Let $R$ be a ring and $G$ be a group.
\begin{enumerate}
\item [(1)] 
If $R$ is a weakly $2$-primal strongly unit nil-clean and $G$ is a locally finite group then the group ring $RG$ is strongly unit nil-clean.
\item [(2)]
If $RG$ is a strongly unit nil-clean ring then $R$ is strongly unit nil-clean and $G$ is a torsion group. 
\end{enumerate} 
\end{thm}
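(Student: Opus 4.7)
The plan is to treat the two parts separately, mirroring the reduction-modulo-$\mathrm{Nil}(R)$ strategy of Theorem~\ref{Theorem1.2}(1) for part~(1), and using the augmentation map together with a Peirce-type analysis for part~(2).

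For~(1), weak $2$-primality gives $\mathrm{Nil}(R)=L(R)$, which is a locally nilpotent two-sided ideal, so $\overline{R}:=R/\mathrm{Nil}(R)$ is reduced. As a homomorphic image of the strongly unit nil-clean ring~$R$, the ring $\overline{R}$ is strongly unit nil-clean, and Lemma~\ref{Lemma 1.3} (applied with $n=1$) upgrades it to unit-regular; being reduced and unit-regular, $\overline{R}$ is strongly regular, so every prime factor is a division ring and in particular artinian. Local finiteness of~$G$ and the Chin--Chen theorem invoked in the proof of Theorem~\ref{Theorem2.2} then yield that $\overline{R}G$ is strongly $\pi$-regular, and the chart in the introduction converts this into strong unit nil-cleanness of $\overline{R}G$. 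It remains to show that $\mathrm{Nil}(R)G$ is a nil ideal of $RG$ and lift: for $\alpha=\sum_{g\in S}r_g g\in\mathrm{Nil}(R)G$ with finite support~$S$, local finiteness of~$G$ makes $H=\langle S\rangle$ finite, and local nilpotency of~$L(R)$ makes the subideal $I\subseteq L(R)$ generated by $\{r_g\}_{g\in S}$ nilpotent with $I^m=0$; the identity
\[
\alpha^m=\sum_{h_1,\ldots,h_m\in S} r_{h_1}r_{h_2}\cdots r_{h_m}\,h_1 h_2\cdots h_m
\]
(valid because $rh=hr$ in the group ring) then forces $\alpha^m=0$. The isomorphism $RG/\mathrm{Nil}(R)G\cong\overline{R}G$ together with the lifting half of \cite[Lemma~2.1]{SheibaniSUN} finishes~(1).

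For~(2), the first assertion is immediate: the augmentation $\epsilon\colon RG\to R$, $\sum r_g g\mapsto\sum r_g$, is a surjective ring homomorphism, and strong unit nil-cleanness passes to homomorphic images by \cite[Lemma~2.1]{SheibaniSUN}. For the torsion claim, take $g\in G$ and apply the strong unit nil-clean hypothesis to the unit $g\in U(RG)$: write $ug=e+n$ with $e^2=e$, $n^k=0$, and $en=ne$. Since $n$ commutes with~$e$, so does~$ug$, and the Peirce decomposition forces the $(1-e)R(1-e)$-component of the unit $ug$, namely $(1-e)n(1-e)$, to be simultaneously nilpotent and a unit of its corner; hence $1-e=0$, so $ug=1+n$ is unipotent. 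The main obstacle is then to convert this unipotency of~$ug$ into finite order of~$g$ itself, because the unit~$u$ has uncontrolled support in $RG$. The plan to surmount this is to reduce modulo a prime ideal $P$ of $R$ (strong unit nil-cleanness descending to $(R/P)G$) and to apply the hypothesis instead to $1-g\in\Delta((R/P)G)$: augmentation forces both the idempotent and nilpotent parts of $u(1-g)$ into $\Delta((R/P)G)$, and analyzing these parts against the infinite-order assumption on~$g$ together with the structure of the augmentation ideal over a domain-like base should then yield the desired contradiction.
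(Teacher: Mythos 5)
Your part (1) is essentially the paper's own argument (pass to $\overline{R}=R/\mathrm{Nil}(R)$, note it is reduced and strongly unit nil-clean hence unit-regular, show every prime factor is a division ring, apply the Chin--Chen theorem to get $\overline{R}G$ strongly $\pi$-regular, and lift along the nil ideal $\mathrm{Nil}(R)G$); your explicit verification that $\mathrm{Nil}(R)G$ is nil via local nilpotency of $L(R)$ is a welcome detail that the paper merely asserts, and it is correct.

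Part (2), however, has a genuine gap: the torsion claim is never actually proved. Your first attempt correctly shows that $ug=1+n$ is unipotent for some unit $u$ (the corner argument is valid because $ug$ commutes with $e$), but, as you yourself note, this says nothing about the order of $g$ since $u$ is uncontrolled. Your fallback is only a plan --- ``analyzing these parts \dots should then yield the desired contradiction'' --- and the analysis it defers is precisely the hard step. The paper's mechanism, which you do not identify, is to apply \cite[Theorem 2.2]{SheibaniSUN} to $x=1-g$, obtaining $(ux)^t(1-ux)^t=0$, and then to take $\beta$ minimal with $(ux)^\beta(1-ux)^t=0$: in every case one extracts a \emph{nonzero} element annihilated by $x$ (for $\beta\le 1$ one uses that $(1-ux)^t\ne 0$, since otherwise $ux$, and hence $x=1-g$, would be a unit, contradicting $\epsilon(1-g)=0$). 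This shows $1-g$ is a zero-divisor in $RG$, and the conclusion that $g$ has finite order then comes from \cite[Proposition 6]{Connell}, which is the key external input your proposal is missing. Without some substitute for the zero-divisor argument and for Connell's characterization of when $1-g$ is a zero-divisor, your reduction modulo a prime $P$ and your observation that $e,n\in\Delta((R/P)G)$ do not by themselves produce a contradiction with $g$ having infinite order.
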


\begin{proof}
(1) \quad By the hypothesis, $Nil(R)=L(R)$. As the quotient ring $\overline{R}=\frac{R}{Nil(R)}$ is a reduced strongly unit nil-clean, it is unit-regular. Let $\overline{Q}$ be any minimal prime ideal of $\overline{R}$. According to Lemma 1.20 in \cite{Goodearl}, $\overline{Q}$ is completely prime. Since $\overline{R}/\overline{Q}$ is a domain, it follows that $\overline{R}/ \overline{Q}$ is a division ring. Consequently, we see that $\overline{R}/ \overline{P}$ is a division ring for every prime ideal $\overline{P}$ of $\overline{R}$. On the other hand, \cite[Theorem 3.3]{ChinChen} ensures that the group ring $\overline{R}G$ is strongly $\pi$-regular and consequently it is strongly unit nil-clean by \cite[Corollary 2.3]{SheibaniSUN}. We also know the ideal $Nil(R)G$ is nil. Finally, the isomorphism $\frac{RG}{Nil(R)G} \cong (\frac{R}{Nil(R)})G$ and \cite[Lemma 2.1]{SheibaniSUN} show that $RG$ is strongly unit nil-clean.

(2) \quad Being a homomorphic image of $RG$, $R$ is strongly unit nil-clean.  Take $1 \neq g \in G$. According to \cite[Theorem 2.2]{SheibaniSUN}, for $x=1-g \in RG$, there are a unit $u \in RG$ and a natural number $t>0$ such that ${(ux-{(ux)}^2)}^t=0$. If $t=1$ then it is easy to see that $x$ is a zero-divisor. Let $t>1$. So $x\underbrace{[{(ux)}^{t-1}{(1-ux)}^t]}_{*}=0$. If (*) is nonzero then $x$ is a zero-divisor. Suppose (*) equal to zero. Thus $x\underbrace{[{(ux)}^{t-2}{(1-ux)}^t]}_{**}=0$. Again, if (**) is nonzero then $x$ is a zero-divisor otherwise, suppose $\beta$ is the smallest natural number such that ${(ux)}^{\beta}{(1-ux)}^t=0$. If $\beta=1$ then $x{(1-ux)}^t=0$. It is clear that ${(1-ux)}^t \neq 0$ otherwise, $ux=1-(1-ux)$ is a unit, that is a contradiction. So $x$ is a zero-divisor in this case, too. If $\beta>1$ then
$x\underbrace{[{(ux)}^{\beta-1}{(1-ux)}^t]}_{***}=0$. By the hypothesis, (***) is nonzero. However, $x=1-g$ is a zero-divisor. Now \cite[Proposition 6]{Connell} yields that $g$ is of finite order.
\end{proof}

\begin{cor}[{\cite[Proposition 3.4]{ChinChen}}]
Let $R$ be a ring and $G$ a group. If $RG$ is strongly $\pi$-regular, then $R$
is strongly $\pi$-regular and $G$ is torsion.
\end{cor}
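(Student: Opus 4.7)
The plan is to derive the corollary as a direct consequence of Theorem \ref{Theorem2.1}(2), combined with two standard ingredients: the closure of strong $\pi$-regularity under surjective homomorphisms, and the inclusion of strongly $\pi$-regular rings in the class of strongly unit nil-clean rings (already noted in the introduction and cited there as \cite[Corollary 2.3]{SheibaniSUN}).

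First I would handle the assertion that $R$ is strongly $\pi$-regular. The augmentation map $\varepsilon\colon RG \to R$, $\sum_{g \in G} r_g g \mapsto \sum_{g \in G} r_g$, is a surjective ring homomorphism. Given any $a \in R$, pick a preimage $\tilde a \in RG$; by hypothesis there exist $n \in \mathbb{N}$ and $s,t \in RG$ with $\tilde a^{n} = \tilde a^{2n} s = t\, \tilde a^{2n}$. Applying $\varepsilon$ yields $a^{n} = a^{2n}\varepsilon(s) = \varepsilon(t)\, a^{2n}$, so $a^{n}$ is strongly regular in $R$. Hence $R$ is strongly $\pi$-regular.

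For the torsion conclusion, I would invoke the fact that every strongly $\pi$-regular ring is strongly unit nil-clean. Thus, from the hypothesis, $RG$ is strongly unit nil-clean, and then Theorem \ref{Theorem2.1}(2) immediately forces $G$ to be a torsion group.

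There is essentially no genuine obstacle here; the corollary is really a repackaging of Theorem \ref{Theorem2.1}(2) together with the chain of implications displayed in the introductory diagram. The only technical point worth spelling out is the routine descent of strong $\pi$-regularity through the augmentation map, which I would state in one line rather than grind out.
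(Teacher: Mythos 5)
Your proposal is correct and matches the paper's (implicit) intent: the corollary is stated without proof precisely because it follows from Theorem \ref{Theorem2.1}(2) together with the fact that strongly $\pi$-regular rings are strongly unit nil-clean, plus the routine observation that strong $\pi$-regularity descends to the homomorphic image $R$ of $RG$. You have correctly supplied the one small step (descent through the augmentation map) that Theorem \ref{Theorem2.1}(2) alone does not give, since that theorem only yields that $R$ is strongly unit nil-clean rather than strongly $\pi$-regular.
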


A group $G$ is called locally nilpotent if every finitely generated
subgroup of $G$ is nilpotent. It is obvious that an abelian group and a nilpotent group are locally nilpotent.

\begin{cor}
Let $R$ be a weakly $2$-primal ring and $G$ be a locally nilpotent group. Then the group ring $RG$ is strongly unit nil-clean if and only if $R$ is an strongly unit nil-clean ring and $G$ is a locally finite group.
\end{cor}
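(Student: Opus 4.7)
The plan is to derive this corollary by combining the two halves of Theorem \ref{Theorem2.1} with a standard structural fact about locally nilpotent torsion groups, namely that every such group is automatically locally finite. Since the weak $2$-primality hypothesis is already built into Theorem \ref{Theorem2.1}, very little new work is required beyond this group-theoretic observation.

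For the ``if'' direction, I would simply invoke Theorem \ref{Theorem2.1}(1): since $R$ is weakly $2$-primal and strongly unit nil-clean and $G$ is locally finite by assumption, that theorem immediately yields that $RG$ is strongly unit nil-clean. No further argument is needed here.

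For the ``only if'' direction, I would first apply Theorem \ref{Theorem2.1}(2) to conclude that $R$ is strongly unit nil-clean and $G$ is a torsion group. The remaining task is to upgrade ``torsion'' to ``locally finite'' using the hypothesis that $G$ is locally nilpotent. To this end, let $H$ be an arbitrary finitely generated subgroup of $G$. Then $H$ is a finitely generated nilpotent group, and every element of $H$ has finite order since $G$ is torsion. It is a classical fact that a finitely generated nilpotent group in which every element has finite order is itself finite; this can be proved by induction on the nilpotency class, using that the center of a finitely generated nilpotent group is a finitely generated abelian group (hence a finitely generated torsion abelian group, which is finite), and that the quotient by the center is again finitely generated nilpotent of smaller class with every element torsion. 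Therefore $H$ is finite, and since $H$ was an arbitrary finitely generated subgroup, $G$ is locally finite.

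The only non-routine step is the group-theoretic lemma that a finitely generated nilpotent torsion group is finite, but this is standard and can be cited from any textbook on infinite group theory; no delicate ring-theoretic argument is required beyond what Theorem \ref{Theorem2.1} already provides. Consequently, I do not anticipate any genuine obstacles, and the proof should fit into just a few lines.
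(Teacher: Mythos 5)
Your proposal is correct and follows exactly the paper's route: the paper also combines both parts of Theorem \ref{Theorem2.1} with the standard fact that a torsion locally nilpotent group is locally finite, except that it simply cites this group-theoretic fact from the literature rather than sketching the induction on nilpotency class as you do.
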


\begin{proof}
It follows from \cite[Theorem 2.25]{ClementMZ} and Theorem \ref{Theorem2.1}. 
\end{proof}

\begin{cor}
Let $R$ be a commutative ring and $G$ be an abelian group. Then $RG$ is strongly unit nil-clean if and only if $R$ is strongly unit nil-clean and $G$ is a torsion group.
\end{cor}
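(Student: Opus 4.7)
The plan is to reduce the statement directly to Theorem \ref{Theorem2.1}, since both hypotheses specialize nicely in the commutative/abelian setting. For the forward implication, if $RG$ is strongly unit nil-clean then Theorem \ref{Theorem2.1}(2) already yields, word for word, that $R$ is strongly unit nil-clean and that $G$ is a torsion group, so no additional argument is required.

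For the converse implication, I would verify the two hypotheses of Theorem \ref{Theorem2.1}(1). First, since $R$ is commutative, $\mathrm{Nil}(R)$ coincides with the prime radical $\mathrm{Nil}_*(R)$, so $R$ is $2$-primal and in particular weakly $2$-primal. Second, an abelian torsion group is automatically locally finite: any finitely generated subgroup of $G$ is a finitely generated abelian torsion group, which by the structure theorem for finitely generated abelian groups is finite. Hence the hypotheses of Theorem \ref{Theorem2.1}(1) are met, and that theorem yields that $RG$ is strongly unit nil-clean. (Equivalently, one could quote the preceding corollary, since abelian groups are locally nilpotent and ``torsion'' and ``locally finite'' coincide for abelian groups.) No substantive obstacle is anticipated; the entire argument is a bookkeeping step matching the commutative/abelian hypotheses to the weakly $2$-primal/locally finite framework of the previous theorem.
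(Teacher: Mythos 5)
Your proof is correct and follows the route the paper intends: the forward direction is a direct application of Theorem \ref{Theorem2.1}(2), and the converse reduces to Theorem \ref{Theorem2.1}(1) via the observations that a commutative ring is ($2$-primal hence) weakly $2$-primal and that an abelian torsion group is locally finite (equivalently, via the preceding corollary on locally nilpotent groups). The paper states this corollary without proof, and your bookkeeping supplies exactly the missing details.
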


The following corollary gives a partial answer to the \textbf{Queation 2}

\begin{cor}
If $R$ is a unit-regular $NI$-ring and $G$ is a locally finite group then the group ring $RG$ is strongly unit nil-clean. 
\end{cor}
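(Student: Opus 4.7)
The plan is to deduce the statement directly from Theorem \ref{Theorem2.1}(1), so I only need to verify that a unit-regular NI-ring satisfies the hypotheses of that theorem, namely that it is weakly $2$-primal and strongly unit nil-clean. The latter is already noted in the introduction (every unit-regular ring is strongly unit nil-clean), so the entire task reduces to showing that a unit-regular NI-ring has $\mathrm{Nil}(R)=L(R)$.

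I would establish the stronger statement that a unit-regular NI-ring is in fact reduced. Take any $a\in\mathrm{Nil}(R)$. By regularity there exists $x\in R$ with $a=axa$, and then $e:=ax$ is an idempotent. Since $R$ is an NI-ring, $\mathrm{Nil}(R)$ is a two-sided ideal, so $e=ax\in\mathrm{Nil}(R)$. An element which is simultaneously idempotent and nilpotent must vanish, hence $e=ax=0$, and therefore $a=axa=ea=0$. This gives $\mathrm{Nil}(R)=0$, so in particular $\mathrm{Nil}(R)=L(R)=0$, showing that $R$ is (weakly) $2$-primal.

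With $R$ being a weakly $2$-primal strongly unit nil-clean ring and $G$ a locally finite group, Theorem \ref{Theorem2.1}(1) applies to conclude that $RG$ is strongly unit nil-clean.

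There is essentially no obstacle here, since Theorem \ref{Theorem2.1}(1) does all the work; the only new observation is the short idempotent-meets-nilpotent argument showing that the NI hypothesis forces a regular ring to be reduced. (One could alternatively invoke \cite[Theorem 2.8]{KhuranaMarks} via the commutativity of the nilpotents, as is done in the proof of Corollary \ref{Corollary2.1}, but the direct argument above is more elementary and self-contained.)
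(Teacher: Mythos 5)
Your proof is correct and follows the route the paper intends: the corollary is stated without proof, but it is clearly meant as an immediate consequence of Theorem \ref{Theorem2.1}(1) together with the observation (asserted in the proof of Corollary \ref{Corollary2.1}) that a unit-regular $NI$-ring satisfies $\mathrm{Nil}(R)=0$. Your explicit idempotent-meets-nilpotent argument for that reduction is a valid and welcome filling-in of a step the paper only asserts; one could equally note that $\mathrm{Nil}(R)$, being a nil ideal, lies in $J(R)=0$ for a regular ring.
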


\begin{prop}\label{Proposition2.3}
Let $R$ be a periodic (left) right noetherian ring and $G$ be a locally finite group. Then the group ring $RG$ is periodic if and only if $R$ is periodic.
\end{prop}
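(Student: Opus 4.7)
The forward implication is immediate, since $R$ embeds in $RG$ and periodicity passes to subrings. For the converse, every $\alpha \in RG$ has finite support, and since $G$ is locally finite the subgroup $H := \langle \operatorname{supp}(\alpha) \rangle$ is finite with $\alpha \in RH$; so it suffices to prove $RH$ is periodic for each finite subgroup $H \le G$. Fix such an $H$.

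The next step is to unpack the structure of $R$. If $a \in J(R)$ satisfies $a^m = a^n$ with $m < n$, then $a^m(1-a^{n-m}) = 0$; as $1 - a^{n-m}$ is a unit, $a^m = 0$, so $J(R)$ is nil. Levitzki's theorem in right noetherian rings then makes $J(R)$ nilpotent, and Hopkins--Levitzki makes $R$ right artinian, so $R/J(R)$ is semisimple artinian. By Artin--Wedderburn, $R/J(R) \cong \prod_{i=1}^{r} M_{n_i}(D_i)$ with each $D_i$ a periodic division ring; since every nonzero element of $D_i$ has finite multiplicative order, Jacobson's commutativity theorem forces $D_i$ to be a locally finite field. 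Consequently every element of $(R/J(R))H \cong \prod_i M_{n_i}(D_i H)$ has coefficients in a finite subfield $F \subseteq D_i$ and therefore lies in the finite ring $M_{n_i}(FH)$, making $(R/J(R))H$ periodic. Since $J(R)\cdot RH$ is a nilpotent ideal of $RH$ with $RH/(J(R)\cdot RH) \cong (R/J(R))H$, it remains only to lift periodicity through this nilpotent ideal.

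The main obstacle is precisely that lifting step. I will prove the following lemma: \emph{if $S$ is a ring of positive characteristic and $I$ is a nilpotent ideal with $S/I$ periodic, then $S$ is periodic.} This applies to $S = RH$ with $I = J(R)\cdot RH$, noting that $RH$ has positive characteristic inherited from $R$ (in characteristic $0$ the element $2$ would fail to be periodic). To prove the lemma, fix $a \in S$ and choose $n_0, d \ge 1$ with $a^{n_0+d} - a^{n_0} \in I$, together with $N$ satisfying $I^N = 0$. Since the powers of $a$ commute, the binomial expansion gives
\[
\sum_{k=0}^{N} \binom{N}{k}(-1)^{N-k}\, a^{N n_0 + k d} \;=\; (a^{n_0+d} - a^{n_0})^N \;=\; 0,
\]
a monic integral polynomial relation in $a$. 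Multiplying by arbitrary powers of $a$ and inducting on the exponent shows that $\mathbb{Z}[a]$ is generated as a $\mathbb{Z}$-module by the finite set $\{1, a, \ldots, a^{N n_0 + N d - 1}\}$. Because the characteristic is positive, $\mathbb{Z}[a]$ is actually a finite ring, hence periodic, so $a$ is periodic in $S$. Applied to $S = RH$ this completes the argument.
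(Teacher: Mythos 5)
Your strategy is genuinely different from the paper's: the paper embeds $RH$ into $\mathbb{M}_{|H|}(R)$ via Woods' lemma and then cites a result of Bouzidi that $\mathbb{M}_n(R)$ is periodic whenever $R$ is periodic and right noetherian, whereas you decompose $RH$ by the nilpotent ideal $J(R)\cdot RH$ and prove a self-contained lifting lemma. Most of your argument is sound: the reduction to finite subgroups, the nilpotency of $J(R)$ via Levitzki, the treatment of $(R/J(R))H$ through Artin--Wedderburn and Jacobson's theorem, and the lemma that periodicity lifts modulo a nilpotent ideal in positive characteristic (together with the observation that a periodic ring automatically has positive characteristic) are all correct, and the last of these is a nice replacement for the citation the paper leans on.

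There is, however, one genuine gap: the step ``Hopkins--Levitzki makes $R$ right artinian, so $R/J(R)$ is semisimple artinian.'' Hopkins--Levitzki applies to semiprimary rings, i.e., it already presupposes that $R/J(R)$ is semisimple, which is precisely what you are trying to conclude; and the hypotheses you actually have at that point --- right noetherian with nilpotent Jacobson radical --- do not imply right artinian, as $\mathbb{Z}$ shows. You must use periodicity again here. One way to close the gap: $\overline{R}=R/J(R)$ is semiprime (its prime radical is nilpotent, hence lies in $J(\overline{R})=0$) and right noetherian, hence right Goldie; in a periodic ring every regular element $a$ has a power $a^N$ that is an idempotent, and a regular idempotent must equal $1$, so every regular element of $\overline{R}$ is a unit; a semiprime right Goldie ring in which all regular elements are invertible coincides with its classical right quotient ring, which is semisimple artinian by Goldie's theorem. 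With that substitution your proof goes through.
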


\begin{proof}
Take an element $x=\sum_{g \in G} r_gg \in RG$. If $H_x$ is a subgroup of $G$ generated by the support of $x$ then it is clear $x \in RH_x$. As $G$ is locally finite, the group $H_x$ is finite. Set $n=|H_x|$. By \cite[Lemma 1]{Woods}, $RH_x$ is a subring of $\mathbb{M}_n(R)$. But Theorem 2.19 in \cite{Bouzidi} ensures that $\mathbb{M}_n(R)$ is periodic. Therefore, $RH_x$ and consequently $RG$ is periodic. 
\end{proof}

For a ring $R$ and a fixed natural number $m>1$, the element $e \in R$ is called $m$-potent when $e^m=e$. The ring $R$ is strongly $m$-nil clean if each of its element is the sum of an $m$-potent and a nilpotent which commute with each other. Now compare the following result with \cite[Theorem 5.4(1)]{BMA} or even with \cite[Theorem 4.7(2)]{KWZ}.
 
\begin{ex}
For an integer $m>1$, if $R$ is a strongly $m$-nil clean (left) right noetherian ring and $G$ is a locally finite group then the group ring $RG$ is periodic. 
\end{ex}

The Proposition \ref{Proposition2.3} inspires us to pose this question. If $R$ is a (left) right noetherian (strongly) unit nil-clean ring and $G$ is a locally finite group then is the group ring $RG$ (strongly) unit nil-clean or not? In the sequel, we answer, with an additional restriction,  to this question.

\begin{cor}
Let $R$ be a (left) right noetherian $NI$-ring. If $R$ is a unit nil-clean ring and $G$ is a locally finite group then the group ring $RG$ is strongly unit nil clean. 
\end{cor}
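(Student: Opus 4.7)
The plan is to reduce the problem to Theorem \ref{Theorem2.1}(1) by upgrading the hypothesis ``$R$ is unit nil-clean'' to ``$R$ is strongly unit nil-clean and weakly $2$-primal'' using the noetherian $NI$ structure.

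First I would exploit the noetherian $NI$ hypothesis exactly as in the proof of Corollary \ref{Corollary1.2}: since $\mathrm{Nil}(R)$ is an ideal and $R$ is right noetherian, \cite[Theorem 10.30]{lam2013first} forces $\mathrm{Nil}(R)$ to be nilpotent. In particular $\mathrm{Nil}(R)=L(R)=\mathrm{Nil}_*(R)$, so $R$ is (weakly) $2$-primal and $\overline{R}:=R/\mathrm{Nil}(R)$ is reduced.

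Next, I would promote ``unit nil-clean'' to ``unit-regular'' on the quotient. The ring $\overline{R}$ is a homomorphic image of a unit nil-clean ring, hence is itself unit nil-clean. But $\overline{R}$ is reduced, so for each $\overline{a}\in\overline{R}$ the defining decomposition $\overline{u}\,\overline{a}=\overline{e}+\overline{b}$ forces $\overline{b}=0$, giving $\overline{a}=\overline{u}^{-1}\overline{e}$ with $\overline{e}$ idempotent; a direct check shows $\overline{a}\,\overline{u}\,\overline{a}=\overline{a}$, so $\overline{R}$ is unit-regular. Consequently $\overline{R}$ is strongly unit nil-clean, and applying \cite[Lemma 2.1]{SheibaniSUN} to the nil ideal $\mathrm{Nil}(R)$ of $R$ lifts this to the statement that $R$ itself is strongly unit nil-clean.

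At this point $R$ is a weakly $2$-primal strongly unit nil-clean ring and $G$ is locally finite, so Theorem \ref{Theorem2.1}(1) applies directly and yields that $RG$ is strongly unit nil-clean. I do not expect any serious obstacle; the only step that requires a moment's thought is the passage from ``reduced $+$ unit nil-clean'' to ``unit-regular,'' but this is essentially immediate since reducedness kills the nilpotent summand and Ehrlich's characterization of unit-regularity is then satisfied by inspection. Everything else is bookkeeping with the nilpotency of $\mathrm{Nil}(R)$ and the lemmas already invoked repeatedly in the paper.
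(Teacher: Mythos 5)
Your proposal is correct and follows essentially the same route as the paper: invoke \cite[Theorem 10.30]{lam2013first} to make $\mathrm{Nil}(R)$ nilpotent, hence $R$ weakly $2$-primal, and then feed the result into Theorem \ref{Theorem2.1}(1). In fact you are slightly more careful than the paper's own two-line proof, which applies Theorem \ref{Theorem2.1} directly to a ``weakly $2$-primal unit nil-clean ring''; your explicit upgrade (reduced $+$ unit nil-clean $\Rightarrow$ unit-regular on $R/\mathrm{Nil}(R)$, then lift through the nil ideal via \cite[Lemma 2.1]{SheibaniSUN}) supplies exactly the step the paper leaves implicit.
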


\begin{proof}
By \cite[Theorem 10.30]{lam2013first}, $Nil(R)$ is nilpotent. Hence  $R$ is a weakly $2$-primal unit nil-clean ring. Now Theorem \ref{Theorem2.1} completes the argument.
\end{proof}

\begin{prop}\label{Proposition2.2}
Suppose $R$ is a semi-local ring with a locally nilpotent Jacobson radical and $G$ is a locally finite group. Then the group ring $RG$ is strongly unit nil-clean. 
\end{prop}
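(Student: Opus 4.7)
The plan is to adapt the argument of Proposition \ref{Proposition1.3} by replacing matrix rings with group rings. Two facts must be established: (i) $J(R)G$ is a nil ideal of $RG$, and (ii) the quotient $(R/J(R))G$ is strongly unit nil-clean. Once both are in hand, the standard isomorphism $RG/J(R)G \cong (R/J(R))G$ together with \cite[Lemma 2.1]{SheibaniSUN} delivers the conclusion.

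For (i), fix $x=\sum_{g\in S} r_g g \in J(R)G$ with $S\subseteq G$ a finite support. Let $T$ be the (non-unital) subring of $J(R)$ generated by the finitely many coefficients $\{r_g\}_{g\in S}$; the local nilpotence of $J(R)$ produces an integer $k$ with $T^k=0$. Expanding
\[
x^k \;=\; \sum_{(g_1,\dots,g_k)\in S^k} r_{g_1}r_{g_2}\cdots r_{g_k}\, g_1g_2\cdots g_k ,
\]
every $RG$-coefficient is a sum of $k$-fold products drawn from $T$, hence lies in $T^k=0$, and so $x^k=0$. For (ii), since $R$ is semi-local, $R/J(R)$ is semisimple artinian and decomposes as $R/J(R)\cong \prod_{i=1}^{k}\mathbb{M}_{n_i}(D_i)$ for division rings $D_i$. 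Using the canonical isomorphism $\mathbb{M}_n(D)G\cong \mathbb{M}_n(DG)$, one obtains $(R/J(R))G\cong \prod_{i=1}^k \mathbb{M}_{n_i}(D_i)G$; since a finite direct product of strongly unit nil-clean rings is strongly unit nil-clean, it is enough to treat each factor $\mathbb{M}_{n_i}(D_i)G$ separately.

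To finish, I will show each $\mathbb{M}_{n_i}(D_i)G$ is strongly $\pi$-regular, which by the implication recorded in the introduction suffices. Indeed, an arbitrary element $a\in \mathbb{M}_{n_i}(D_i)G$ is supported on a finitely generated, hence finite, subgroup $H\leq G$ by local finiteness, so $a$ lies in $\mathbb{M}_{n_i}(D_i)H$. The latter is a finite-dimensional $D_i$-algebra, thus artinian, and consequently strongly $\pi$-regular; the defining witnesses for $a$ then live inside $\mathbb{M}_{n_i}(D_i)H\subseteq \mathbb{M}_{n_i}(D_i)G$, and since strong $\pi$-regularity is an elementwise property this transfers to the full ring. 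The main obstacle I anticipate is step (i): converting local nilpotence of $J(R)$ into honest nilness of $J(R)G$ without any uniform bound on nilpotency indices. Everything else is a routine reduction via semisimplicity of $R/J(R)$, local finiteness of $G$, and the lifting lemma for nil ideals.
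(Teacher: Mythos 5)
Your proposal is correct and follows essentially the same route as the paper: both establish that $J(R)G$ is nil from local nilpotence of $J(R)$ and finiteness of supports, show $(R/J(R))G$ is strongly $\pi$-regular (hence strongly unit nil-clean) by observing that each element lies in the group ring of a finite subgroup over an Artinian ring, and conclude via $RG/J(R)G \cong (R/J(R))G$ and the lifting lemma for nil ideals. The only difference is your detour through the Wedderburn--Artin decomposition of $R/J(R)$, which is harmless but unnecessary, since the paper applies the same local-finiteness argument directly to $(R/J(R))H_x$, which is already Artinian as a finitely generated module over the semisimple ring $R/J(R)$.
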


\begin{proof}
As $R$ is a semi-local ring, $R/J(R)$ is semi-simple. Consider $x=\sum_{g \in G}^{} r_gg \in (R/J(R))G$. Let $H_x$ be the subgroup of $G$ generated by the support of $x$. It is clear $x \in (R/J(R))H_x$. As $G$ is locally finite, $H_x$ is finite. Being semisimple $R/J(R)$ implies that it is Artinian. So $(R/J(R))H_x$ is Artinian; hence strongly $\pi$-regular. It follows that $(R/J(R))G$ is strongly $\pi$-regular, and consequantly it is strongly unit nil-clean by \cite[Corollary 2.3]{SheibaniSUN}. On the other hand, similar to the proof of \cite[Theorem 1.2]{Barati2023}, the ideal $J(R)G$ is nil. Now the isomorphism $RG/J(R)G \cong (R/J(R))G$ and \cite[Lemma 2.1]{SheibaniSUN} yield that $RG$ is a strongly unit nil-clean ring.  
\end{proof}

\begin{cor}\label{Corollary2.3}
If $R$ is a right (left) perfect ring and $G$ is a locally finite group, then the group ring $RG$ is strongly unit nil-clean.
\end{cor}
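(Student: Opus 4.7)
The plan is to derive this corollary as an immediate consequence of Proposition \ref{Proposition2.2}, whose two hypotheses I would verify for any right (or left) perfect ring $R$.

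Recall Bass's classical characterization of perfect rings: $R$ is right perfect if and only if $R/J(R)$ is semisimple and $J(R)$ is right T-nilpotent (with the evident left-handed analogue for left perfect rings). Semisimplicity of $R/J(R)$ is precisely the statement that $R$ is semi-local, which supplies the first hypothesis of Proposition \ref{Proposition2.2}.

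For the second hypothesis, I would verify that a one-sided T-nilpotent ideal is automatically locally nilpotent. Given finitely many elements $x_1, \ldots, x_k \in J(R)$, let $S$ be the subring they generate. If $S$ failed to be nilpotent, then for every $n$ some product $x_{i_1} x_{i_2} \cdots x_{i_n}$ would be nonzero. A K\"onig's-lemma argument applied to the finitely branching tree whose vertices are words in $\{x_1, \ldots, x_k\}$ with nonzero product would then produce an infinite sequence $(y_i)_{i \ge 1}$ drawn from $\{x_1, \ldots, x_k\}$ every initial (respectively terminal) product of which is nonzero, contradicting T-nilpotence of $J(R)$. Hence $S$ is nilpotent, so $J(R)$ is locally nilpotent.

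With both hypotheses of Proposition \ref{Proposition2.2} confirmed, I would apply that proposition directly to conclude that $RG$ is strongly unit nil-clean. I foresee no substantive obstacle: the only step requiring any care is the K\"onig-style passage from T-nilpotence to local nilpotence, which is otherwise a standard observation.
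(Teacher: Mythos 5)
Your proposal is correct and follows exactly the route the paper intends: Corollary \ref{Corollary2.3} is stated immediately after Proposition \ref{Proposition2.2} precisely because Bass's theorem makes a right (left) perfect ring semi-local with a T-nilpotent, hence locally nilpotent, Jacobson radical. Your K\"onig's-lemma verification that one-sided T-nilpotence implies local nilpotence is the standard argument and correctly fills in the one detail the paper leaves implicit.
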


In the following example, we give a particular case of Corollary \ref{Corollary2.3}.
\begin{ex}\label{Example2.1}
If R is a semisimple ring and $G$ is a locally finite group then $RG$ is strongly unit nil-clean.
\end{ex}

\begin{rem}
Corollary \ref{Corollary2.3} is indeed the other answer to the \textbf{Question 2}. 
\end{rem}

\bibliographystyle{plain}
\bibliography{Paper4}
\end{document}